\documentclass{amsart}

\usepackage[ruled,vlined, algo2e]{algorithm2e}
\usepackage[english, activeacute]{babel}
\usepackage[latin1]{inputenc}
\usepackage{float}
\usepackage{hyperref}

\usepackage[all]{xy}
\usepackage{amsmath}
\usepackage{amssymb}
\usepackage[left=3cm, right=3cm, top=3cm, bottom=3cm]{geometry}

\newcommand{\SG}{\mathrm{SG}}
\newcommand{\e}{\mathrm{e}}
\newcommand{\g}{\mathrm{g}}

\newcommand{\F}{\mathrm{F}}
\newcommand{\G}{\mathrm{G}}
\newcommand{\m}{\mathrm{m}}
\newcommand{\Ap}{\mathrm{Ap}}
\newcommand{\Z}{\mathbb{Z}}
\newcommand{\N}{\mathbb{N}}
\newcommand{\R}{\mathbb{R}}

\newcommand{{\km}}{\rm k}
\newcommand{\dsum}{\displaystyle\sum}

\newtheorem{theo}{Theorem}
\newtheorem{defi}[theo]{Definition}
\newtheorem{ex}[theo]{Example}
\newtheorem{lem}[theo]{Lemma}
\newtheorem{cor}[theo]{Corollary}
\newtheorem{prop}[theo]{Proposition}
\newtheorem{rem}[theo]{Remark}


\title[Integer Programming and $m$-irreducibility of numerical semigroups]{Integer Programming and $m$-irreducibility of numerical semigroups}

\author[V. Blanco]{V\'ictor Blanco}
\address{Departamento de \'Algebra, Universidad de Granada}
\email{vblanco@ugr.es}

\author[J. Puerto]{Justo Puerto}
\address{Departamento de Estad\'istica e Investigaci\'on Operativa, Universidad de Sevilla}
\email{puerto@us.es}
\date{\today}

\keywords{
integer programming, numerical semigroups, irreducibility, multiplicity.}

\subjclass[2010]{90C10, 20M14, 11D75}

\begin{document}

\begin{abstract}
This paper addresses the problem of decomposing a numerical semigroup into $m$-irreducible
numerical semigroups. The problem originally stated in algebraic terms is translated, introducing the so called Kunz-coordinates, to resolve a series of several discrete optimization problems. First, we prove that finding a minimal $m$-irreducible decomposition is equivalent to solve a multiobjective linear integer problem. Then, we restate that problem as the problem of finding all the optimal solutions of a finite number of single objective integer linear problems plus  a set covering problem. Finally, we prove that there is a suitable transformation that reduces the original  problem to find an optimal solution of a compact integer linear problem. This result ensures a polynomial time algorithm for each given
multiplicity $m$. We have implemented the different algorithms and have performed some computational experiments to show the efficiency of our methodology.
\end{abstract}

\maketitle

\section{Introduction}

The rich literature of discrete mathematics contains an important number of
references,
of theoretical results, that have helped in solving or advancing in the
resolution of many different discrete optimization problems. Nowadays, it is  considered a standard  to cite the connections between graph theory, commutative algebra and optimization.
For instance, it is a topic to mention the connections between the results by K\"onig and Egervary in graph theory and the algorithms by Kuhn for the assignment problem or by Edmonds for the maximum matching problem (\cite{edmonds,egervary,konig,kuhn,lovasz}). In the same vein, there are well-known papers that apply algebraic tools to solve single objective integer linear problems (see \cite{conti-traverso91},
\cite{deloera04}, \cite{lasserre}) or multiobjective integer linear problems (see \cite{blanco-puerto09,blanco-puerto09a,blanco-puerto09:poly, deloera09}).
Moreover, more recently we can find a rich  body of literature (see \cite{onn2010} and the references therein) that addresses general discrete optimization problems (non necessarily linear)
 using tools borrowed from pure and applied Algebra (Graver bases and the like).

On the other hand, although less known,  there have been also some applications of integer programming to solve problems of
commutative algebra (see \cite{ jj07,counting,blanco10}).
The goal of this paper is to analyze and solve another problem arising in commutative algebra using tools from integer programming.

A numerical semigroup is a subset $S$ of $\Z_+$ (here $\Z_+$ denotes the set of non-negative integers) closed under
addition, containing zero and such that $\Z_+\backslash S$ is finite.
 Numerical semigroups were first considered while studying the
set of nonnegative solutions of Diophantine equations and their study is closely related to the analysis of monomial curves (see \cite{delorme}). By these reasons, the theory of numerical semigroups has attracted a number of researchers from the algebraic community. This fact has motivated that some of the terminology used in Algebraic Geometry has been exported to this field. For instance, the multiplicity, the genus, or the embedding dimension of a numerical semigroup.   Further details about the theory of numerical semigroups can be found in the recent monograph by Rosales and Garc\'ia-S\'anchez \cite{Ro-GS09}.

In recent years, the problem of decomposing numerical semigroups into irreducible ones has attracted the interest of the research community (see \cite{branco-nuno07, rosales02, rosales03, rosales03b, rosales04}). Recall that  a numerical semigroup is irreducible if it cannot be expressed as
an intersection of two numerical semigroups containing it properly.
Furthermore, more recently a different notion of irreducibility, the $m$-irreducibility (\cite{blanco-rosales10}) has appeared and has started to be analyzed.
A numerical semigroup with multiplicity $m$ is said $m$-irreducible if it cannot be expressed  as an intersection of two numerical semigroups with multiplicity
 $m$ and containing it properly. (Recall that the multiplicity of a numerical semigroup is the smallest non zero element belonging to it.) The question of
 existence of $m$-irreducible decompositions has been proved in \cite{blanco-rosales10}. Nevertheless, it is still missing a methodology, different to the
 almost pure brute force enumeration, to find $m$-irreducible decompositions of minimal size.

In this paper, we give a methodology to obtain such a minimal decomposition into $m$-irreducible numerical semigroups by
using tools borrowed from discrete optimization. To this end, we identify one-to-one  numerical semigroups with  the integer
 vectors inside a rational polytope (see \cite{rosales02-london}).  For the sake of this identification, we introduce the notion of Kunz-coordinates vector
to translate the considered problem in the problem of finding some integer optimal solutions, with respect to appropriate objective
 functions,   in the Kunz polytope. Then, the problem of enumerating the minimal $m$-irreducible numerical semigroups involved in
 the decomposition is formulated as a multiobjective integer program. We state that solving this problem is equivalent
 to enumerate the entire sets of optimal solutions of a finite set of single-objective integer problems. The number of
 integer problems to be solved is bounded above by $m-1$, where $m$ is the multiplicity of the semigroup to be decomposed.
 Finally, we solve a set covering problem to ensure that the decomposition has the smallest number of elements.
Although this approach is exact its complexity is rather high and
in general one cannot prove that it is polynomial for any given multiplicity
$m$. This comes from the fact that there are nowadays relatively few exact methods to solve general
multiobjective integer and linear problems (see \cite{ehrgott02}) and it is known that the complexity of solving in general this type of problems is $\#$P-hard.
To overcome this difficulty, we introduce a different machinery that identify a minimal decomposition by 
solving a compact  linear integer program. This approach ensures that the problem of finding a minimal
$m$-irreducible decomposition is  polynomially  solvable.

In Section \ref{sec:1} we recall the main definitions and results needed for this paper to be selfcontained. Section \ref{sec:2}
 is devoted to translate the problem of finding numerical semigroups of a given multiplicity into the problem of detecting integer
points inside a rational polytope, introducing the notion of Kunz-coordinates vector. We write in Section \ref{sec:3} the conditions,
in terms of the Kunz-coordinates vector fo a numerical semigroup to be a $m$-irreducible oversemigroup. Section \ref{sec:4} is devoted
to formulate the problem of decomposing and minimally decomposing into $m$-irreducible numerical semigroups as a mathematical programming problem.
We give an exact and a heuristic approach for computing such a minimal decomposition based on solving some integer programming problems.
In Section \ref{sec:5} we present a compact model to compute, by solving only one integer programming problem, a minimal decomposition of
 a numerical semigroup into $m$-irreducible numerical semigroups. There, we also prove that this problem is  polynomially solvable. 
 Finally, in Section \ref{sec:6} we show some computational tests performed to check the efficiency of the presented algorithms with respect to the current implementation in {\sf GAP} \cite{numericalsgps}.

\section{Preliminaries}
\label{sec:1}
For the sake of readability, in this section we recall the main results about numerical semigroups needed so that the paper is selfcontained.

 Let $S$ be a numerical semigroup. We say that $\{n_1, \ldots, n_p\}$ is a system of
generators of $S$ if $S=\{\dsum_{i=1}^p n_i x_i: x_i \in \Z_+, i=1, \ldots, p\}$. We denote $S=\langle n_1, \ldots, n_p\rangle$ if $\{n_1, \ldots, n_p\}$
is a system of generators of $S$.

The least positive integer belonging to $S$ is denoted by $\m(S)$, and is called the \textit{multiplicity of} $S$ ($\m(S) = \min(S\setminus
\{0\})$).

Two important notions of irreducibility are extensively used through this paper. They are the following:
\begin{defi}[Irreducibility and m-irreducibility] $ $
\label{def:1}
\begin{itemize}
 \item A numerical semigroup is \emph{irreducible} if it cannot be expressed as an intersection of two numerical semigroups containing it properly.
\item A numerical semigroup with multiplicity $m$ is $m$-\emph{irreducible} if it cannot be expressed as an intersection of two numerical semigroups with multiplicity $m$ containing it properly.
\end{itemize}
\end{defi}
In \cite{blanco-rosales10} the authors analyze and characterize the set of $m$-irreducible numerical semigroups. Note that, in particular, any irreducible numerical semigroup is $m$-irreducible, while the converse is not true. One of the results in that paper is the key for the analysis done through this paper and it is stated as follows.
\begin{prop}[\cite{blanco-rosales10}]
\label{prop:2}
Let $S$ be a numerical semigroup with multiplicity $m$. Then, there exist $S_1, \ldots, S_k$ $m$-irreducible numerical semigroups such that $S=S_1 \cap \cdots \cap S_k$.
\end{prop}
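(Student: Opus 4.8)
The plan is to prove existence by a well-founded induction that rests on the finiteness of the family of oversemigroups of $S$ of multiplicity $m$. The first thing I would establish is precisely this finiteness. Since $S$ is a numerical semigroup, its set of gaps $\Z_+\setminus S$ is finite; any numerical semigroup $T$ with $S\subseteq T\subseteq\Z_+$ is then completely determined by which of these finitely many gaps it adjoins, so $S$ has only finitely many oversemigroups, and in particular only finitely many oversemigroups of multiplicity $m$. I would denote this finite set by $\mathcal{A}(S)$. Alongside this I would record the routine closure fact that the intersection of two numerical semigroups of multiplicity $m$ is again one of the same multiplicity: it contains $0$, it is closed under addition, its complement in $\Z_+$ is the union of the two finite complements (hence finite), and its multiplicity equals $m$ because $m$ belongs to both factors while no smaller positive integer does.

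Granting these preliminaries, I would argue by strong induction on the cardinality $|\mathcal{A}(S)|$. For the base case $|\mathcal{A}(S)|=1$, the only oversemigroup of $S$ of multiplicity $m$ is $S$ itself, so $S$ cannot be written as an intersection of two \emph{proper} oversemigroups of multiplicity $m$; hence $S$ is $m$-irreducible and the decomposition $S=S$ with $k=1$ works. For the inductive step, if $S$ happens to be $m$-irreducible we again take $k=1$. Otherwise, by the definition of $m$-irreducibility there exist numerical semigroups $T_1,T_2$ of multiplicity $m$, each properly containing $S$, with $S=T_1\cap T_2$. The crucial observation is the strict descent $\mathcal{A}(T_i)\subsetneq\mathcal{A}(S)$: every oversemigroup of $T_i$ of multiplicity $m$ is also an oversemigroup of $S$ of multiplicity $m$, so $\mathcal{A}(T_i)\subseteq\mathcal{A}(S)$, while $S\in\mathcal{A}(S)$ but $S\notin\mathcal{A}(T_i)$ because $S$ does not contain $T_i$. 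Thus the induction hypothesis applies to each $T_i$, producing $m$-irreducible decompositions $T_i=\bigcap_j S_{ij}$, and consequently $S=T_1\cap T_2=\bigcap_{i,j}S_{ij}$ is a decomposition of $S$ into $m$-irreducible numerical semigroups, as required.

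The main obstacle I anticipate is not in the algebraic verifications, which are routine, but in pinning down the termination of the recursive splitting, that is, in choosing the right induction measure and proving its strict decrease. The strict inclusion $\mathcal{A}(T_i)\subsetneq\mathcal{A}(S)$ is exactly what guarantees well-foundedness of the descent, and the finiteness of $\mathcal{A}(S)$ is what makes $|\mathcal{A}(S)|$ a legitimate $\Z_+$-valued induction parameter; once those two points are secured, the rest of the argument follows mechanically from the definition of $m$-irreducibility.
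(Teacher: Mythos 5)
Your proof is correct, and it can only be judged on its own merits: the paper does not prove Proposition~\ref{prop:2} at all, but quotes it from \cite{blanco-rosales10}. Your route is a self-contained well-founded induction, and every step checks out: finiteness of the set $\mathcal{A}(S)$ of multiplicity-$m$ oversemigroups (an oversemigroup is determined by the subset of the finitely many gaps it adjoins), the splitting $S=T_1\cap T_2$ supplied by the negation of $m$-irreducibility, and the strict descent $\mathcal{A}(T_i)\subsetneq\mathcal{A}(S)$, which holds because $S\in\mathcal{A}(S)$ while $S\notin\mathcal{A}(T_i)$; note the pieces produced by the induction automatically have multiplicity $m$, since $m$-irreducibility presupposes it. This differs from the argument in the cited source and from the constructive viewpoint the present paper builds on (Lemma~\ref{lemma:7}, Proposition~\ref{prop:11}, and the tree-search algorithm of \cite{blanco-rosales10} recalled in Section~\ref{sec:5}): there, one exhibits the decomposition explicitly, taking for each special gap $h\in\SG_m(S)$ an oversemigroup of multiplicity $m$ maximal with respect to not containing $h$; maximality forces $m$-irreducibility, and intersecting over the at most $m-1$ special gaps recovers $S$. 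Your argument buys brevity and minimal prerequisites --- only the definition and finiteness are used --- but it is purely existential: it gives no bound on $k$ and no description of the pieces, whereas the gap-based construction yields $k\le\#\SG_m(S)\le m-1$ and identifies the candidate pieces concretely, which is precisely the structural information the paper's optimization machinery exploits.
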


From the above result one may think of obtaining the minimal number of elements involved in the above intersection of $m$-irreducible numerical semigroup. Formally, we describe what we understand by decomposing and minimally decomposing a numerical semigroup with multiplicity $m$ into $m$-irreducible numerical semigroups.

\begin{defi}[Decomposition into $m$-irreducible numerical semigroups]
\label{def:3}
Let $S$ be a numerical semigroup with multiplicity $m$. Decomposing $S$ into $m$-irreducible numerical semigroups consists of finding a set of $m$-irreducible numerical semigroups $S_1, \ldots, S_{{\rm r} (S)}$ such that
 $S=S_1 \cap \cdots \cap S_{{\rm r} (S)}$ (This decomposition is always possible by Proposition \ref{prop:2}).

A minimal decomposition of $S$ into $m$-irreducible numerical semigroups is a decomposition with minimum {{\rm r} (S)} (minimal cardinality of the number of $m$-irreducible numerical semigroups involved in the decomposition).
\end{defi}

For a numerical semigroup $S$, the set of gaps of $S$, $\G(S)$, is the set $\Z_+\backslash S$ (that is finite by definition of numerical semigroup). We denote by $\g(S)$ the cardinal of that set, that is usually called the \emph{genus} of $S$. The \emph{Frobenius number} of $S$, $\F(S)$, is the largest integer not belonging to $S$.

Let $S$ be a numerical semigroup with multiplicity $m$. To decompose $S$ into $m$-irreducible numerical semigroups, we first need to know how to identify those $m$-irreducible numerical semigroups. In \cite{blanco-rosales10} it is proved that $S$ is $m$-irreducible if and only if it is maximal (w.r.t. the inclusion order) in the set of numerical semigroups with multiplicity $m$ and Frobenius number $\F(S)$. In \cite{rosales03} it is stated that a numerical semigroup, $S$, is irreducible if and only if $\g(S)=\left\lceil \dfrac{\F(S)+1}{2} \right\rceil$. Next, we recall two results in \cite{blanco-rosales10} that characterize, in terms of the genus and the Frobenius number, the set of $m$-irreducible numerical semigroups.

\begin{prop}[\cite{blanco-rosales10}]
\label{prop:4}
 A numerical semigroup with multiplicity $m$, $S$, is $m$-irreducible if and only if one of the following conditions holds:
\begin{enumerate}
\item If $\F(S)=\g(S)=m-1$ then $S=\{x \in \Z_+: x\ge m\} \cup \{0\}$.
\item If $\F(S) \in \{m+1, \ldots, 2m-1\}$ and $\g(S)=m$ then $S=\{x \in \Z_+: x\ge m, x\neq \F(S)\} \cup \{0\}$.
\item If $\F(S)>2m$ then  $S$ is an irreducible numerical semigroup. (In this case $\g(S)=\left\lceil \dfrac{\F(S)+1}{2} \right\rceil$).
\end{enumerate}
\end{prop}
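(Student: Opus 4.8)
The plan is to reduce everything to the maximality characterization recalled just above the statement: $S$ is $m$-irreducible if and only if it is maximal, with respect to inclusion, in the family $\mathcal{F}(m,F)=\{T: T \text{ numerical semigroup}, \m(T)=m, \F(T)=F\}$, where $F=\F(S)$. First I would record two elementary constraints forced by $\m(S)=m$: the integers $\{1,\dots,m-1\}$ are all gaps, so $\g(S)\ge m-1$ and $\F(S)\ge m-1$; and no multiple of $m$ is a gap, so $\F(S)\notin\{m,2m,3m,\dots\}$. Consequently $F$ lies in exactly one of the three ranges $F=m-1$, $F\in\{m+1,\dots,2m-1\}$, or $F>2m$, which are precisely the three cases of the proposition, and I would treat each separately.

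For the first two ranges I would exhibit the maximum of $\mathcal{F}(m,F)$ explicitly. The key observation is that when $F\le 2m-1$ any sum of two nonzero elements of a semigroup of multiplicity $m$ is at least $2m>F$, hence automatically belongs to the semigroup; therefore the candidate set $S_0=\{0\}\cup\{x\ge m: x\ne F\}$ (and $S_0=\{0\}\cup\{x\ge m\}$ when $F=m-1$) is closed under addition, has multiplicity $m$ and Frobenius number $F$. Since every $T\in\mathcal{F}(m,F)$ satisfies $\G(T)\supseteq\{1,\dots,m-1\}\cup\{F\}$, we get $T\subseteq S_0$, so $S_0$ is the unique maximal element. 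Thus in these ranges $S$ is $m$-irreducible if and only if $S=S_0$, which I would restate through the genus ($\g(S)=m-1$ in case (1), $\g(S)=m$ in case (2)) to match the stated conclusions; both rephrasings are immediate because $\G(S)$ must already contain the listed gaps, so equality of genus forces equality of the gap sets.

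The substantive case is $F>2m$, where I would prove that $m$-irreducibility coincides with irreducibility. One direction is general and needs no hypothesis on $F$: an irreducible semigroup is maximal among \emph{all} numerical semigroups of Frobenius number $F$, hence a fortiori maximal inside the smaller family $\mathcal{F}(m,F)$, so it is $m$-irreducible. For the converse I would argue by contradiction: if $S$ is $m$-irreducible but not irreducible, then $S$ is not maximal among all numerical semigroups with Frobenius number $F$, so some $S\cup\{h\}$ with $h$ a gap different from $F$ is again a numerical semigroup (with Frobenius number $F$). If $h>m$ then $S\cup\{h\}$ still has multiplicity $m$, contradicting maximality in $\mathcal{F}(m,F)$ at once. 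The remaining possibility is $h<m$, and this is exactly where the hypothesis $F>2m$ must enter.

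In that subcase I would pass to the reflected gap $y=F-h$. Because $S\cup\{h\}$ is a semigroup, we have $h+s\in S$ for every $s\in S\setminus\{0\}$; applying this and using $F=h+(F-h)\notin S$ shows that $y=F-h$ is itself a gap, and since $h<m<F/2$ (here $F>2m$ is used) it satisfies $m<y<F$. The crux is to verify that $y$ is again an addable gap: for every nonzero $s\in S$ we have $s\ge m>h$, whence $y+s=F+(s-h)>F$ and therefore $y+s\in S$ automatically, while $2y>F$ gives $2y\in S$; thus $S\cup\{y\}$ is a numerical semigroup. As $m<y<F$, it lies in $\mathcal{F}(m,F)$ and strictly contains $S$, the desired contradiction. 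This reflection step, which converts a forbidden small enlargement into an admissible large one, is the part I expect to require the most care, since it is precisely where the threshold $F>2m$ is indispensable; once it is in place, combining the three cases yields the claimed characterization.
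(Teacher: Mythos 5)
Your proof is correct, but note that the paper does not actually prove this proposition: it is quoted from \cite{blanco-rosales10}, so there is no internal proof here to compare against. What the paper does supply, immediately before the statement, is the fact your argument rests on, namely that $m$-irreducibility is equivalent to maximality in the family of numerical semigroups with multiplicity $m$ and Frobenius number $\F(S)$; building the characterization on that equivalence is the natural route. All of your steps check out: for $\F(S)\le 2m-1$ the candidate $S_0=\{0\}\cup\{x\ge m : x\ne \F(S)\}$ is closed under addition precisely because sums of nonzero elements are at least $2m>\F(S)$, and it is the unique maximum of the family, which yields cases (1) and (2) together with their genus reformulations; for $\F(S)>2m$ the reflection $y=\F(S)-h$ of a small addable gap $h<m$ is exactly the device needed, and your verifications ($y$ is a gap since otherwise $\F(S)=h+y\in S$; $m<y<\F(S)$ because $\F(S)>2m$; $S\cup\{y\}$ is a semigroup because $y+s>\F(S)$ for every nonzero $s\in S$ and $2y>\F(S)$) are all sound. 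Two minor points. First, the passage from ``$S$ not maximal among semigroups with Frobenius number $\F(S)$'' to ``some gap $h\ne\F(S)$ makes $S\cup\{h\}$ a semigroup'' deserves its one-line justification: take $h$ maximal in $T\setminus S$ for a strictly larger $T$ with the same Frobenius number. Second, you import the characterization of irreducible semigroups as the maximal ones with fixed Frobenius number \cite{rosales03}, whereas the paper quotes that result only in its genus form $\g(S)=\left\lceil \frac{\F(S)+1}{2}\right\rceil$; citing it is fair, but the direction ``irreducible $\Rightarrow$ $m$-irreducible'' also follows from the definitions alone, since an intersection of two proper oversemigroups of multiplicity $m$ is in particular an intersection of two proper oversemigroups.
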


\begin{cor}[\cite{blanco-rosales10}]
\label{cor:5}
 Let $S$ be a numerical semigroup with multiplicity $m$. Then, $S$ is $m$-irreducible if and only if $\g(S) \in \big\{m-1, m, \left\lceil \dfrac{\F(S)+1}{2} \right\rceil\big\}$.
\end{cor}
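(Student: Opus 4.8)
The plan is to derive the corollary directly from Proposition~\ref{prop:4} together with the irreducibility criterion of \cite{rosales03} recalled above, namely that a numerical semigroup $T$ is irreducible precisely when $\g(T)=\left\lceil\frac{\F(T)+1}{2}\right\rceil$. I would also use repeatedly the elementary fact that whenever $\m(S)=m$ the integers $1,\dots,m-1$ are all gaps of $S$, so $\{1,\dots,m-1\}\subseteq\G(S)$ and in particular $\g(S)\ge m-1$.

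For the forward implication I would simply read off the genus in each of the three (mutually exclusive, by the ranges imposed on $\F(S)$) cases of Proposition~\ref{prop:4}. If $S$ is $m$-irreducible, then exactly one situation occurs: in case (1) one has $\g(S)=m-1$; in case (2) one has $\g(S)=m$; and in case (3) the semigroup $S$ is irreducible, so $\g(S)=\left\lceil\frac{\F(S)+1}{2}\right\rceil$. Hence $\g(S)\in\big\{m-1,\,m,\,\left\lceil\frac{\F(S)+1}{2}\right\rceil\big\}$, which settles this direction.

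For the converse I would treat the three admissible genus values separately. If $\g(S)=\left\lceil\frac{\F(S)+1}{2}\right\rceil$, the criterion of \cite{rosales03} shows that $S$ is irreducible, and since every irreducible numerical semigroup is $m$-irreducible, we are done. If $\g(S)=m-1$, then the $m-1$ forced gaps $1,\dots,m-1$ already exhaust $\G(S)$, so $S=\{0\}\cup\{x\in\Z_+:x\ge m\}$ with $\F(S)=m-1$; this is exactly case (1) of Proposition~\ref{prop:4}, hence $S$ is $m$-irreducible. Finally, if $\g(S)=m$, there is a single extra gap $g>m$ beyond $1,\dots,m-1$, and I would show $g\in\{m+1,\dots,2m-1\}$, so that $S=\{0\}\cup\{x\in\Z_+:x\ge m,\ x\neq g\}$ falls under case (2).

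The only step requiring genuine argument is this last bound on the extra gap, and it is where I expect the sole obstacle. I would establish $g<2m$ using closure of $S$ under addition: if $g\ge 2m$, then both $m$ and $g-m$ lie in $S$, since $g-m\ge m$ and $g$ is the unique gap exceeding $m-1$ that is at least $m$, whence $g-m$ is not a gap; yet their sum equals $g\notin S$, contradicting that $S$ is a semigroup. Therefore $g=\F(S)\in\{m+1,\dots,2m-1\}$ with $\g(S)=m$, matching case (2) of Proposition~\ref{prop:4} and completing the proof.
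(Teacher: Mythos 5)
Your proof is correct and takes essentially the intended route: the paper states this corollary without proof (citing \cite{blanco-rosales10}), as an immediate consequence of Proposition \ref{prop:4} together with the irreducibility criterion $\g(S)=\left\lceil\frac{\F(S)+1}{2}\right\rceil$ of \cite{rosales03}, which is exactly how you derive it. Your additive-closure argument excluding an extra gap $g\ge 2m$ correctly supplies the one nontrivial step in the genus-$m$ case, so the three genus values line up precisely with the three cases of Proposition \ref{prop:4}.
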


For a given numerical semigroup, $S$, our goal is to find a set of $m$-irreducible numerical semigroups whose intersection is $S$. Then,  we can restrict the search of these semigroups to the set of numerical semigroups that contain $S$. This set is called the set of \emph{oversemigroups} of $S$.

\begin{defi}[Oversemigroups]
\label{def:6}
Let $S$ be a numerical semigroup with multiplicity $m$. The set, $\mathcal{O}(S)$, of oversemigroups of $S$ is
$$
\mathcal{O}(S) = \{S'\mbox{ numerical semigroup}: S \subseteq S'\}
$$
The set, $\mathcal{O}_m(S)$, of oversemigroups of $S$ with multiplicity $m$ is $\mathcal{O}_m(S) = \{S ' \in \mathcal{O}(S): \m(S')=m\}$.
\end{defi}
Denote by $\mathcal{I}_m(S)$ the set of minimal $m$-irreducible numerical semigroups, with respect to the inclusion poset, in the set $\mathcal{O}_m(S)$.
From the set $\mathcal{I}_m(S)$ we can obtain,
a first decomposition of $S$ into $m$-irreducible numerical semigroup, although, in general, it is not minimal (see Example 27 in \cite{blanco-rosales10}).

\begin{lem}
 \label{lemma:7}
Let $S$ be a numerical semigroup with multiplicity $m$ and $\mathcal{I}_m(S)=\{S_1, \ldots, S_n\}$. Then $S=S_1 \cap \cdots \cap S_n$ is a decomposition of $S$ into $m$-irreducible numerical semigroups.
\end{lem}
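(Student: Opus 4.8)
The plan is to establish the two inclusions separately. The inclusion $S \subseteq S_1 \cap \cdots \cap S_n$ is immediate and requires no work beyond unwinding definitions: every $S_i \in \mathcal{I}_m(S)$ belongs to $\mathcal{O}_m(S)$, so it is an oversemigroup of $S$ and hence $S \subseteq S_i$ for all $i$; intersecting over $i$ gives $S \subseteq S_1 \cap \cdots \cap S_n$. All the content lies in the reverse inclusion.

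For the reverse inclusion, the idea is to take an \emph{arbitrary} $m$-irreducible decomposition supplied by Proposition \ref{prop:2} and ``push it down'' to the minimal layer $\mathcal{I}_m(S)$. Concretely, Proposition \ref{prop:2} yields $m$-irreducible numerical semigroups $T_1, \ldots, T_k$ with $S = T_1 \cap \cdots \cap T_k$. Since $S \subseteq T_j$ for each $j$, and since by Definition \ref{def:1} an $m$-irreducible numerical semigroup has multiplicity $m$, each $T_j$ is an $m$-irreducible member of $\mathcal{O}_m(S)$. Next I would invoke finiteness: because $\Z_+\setminus S$ is finite, $S$ has only finitely many oversemigroups, so the $m$-irreducible members of $\mathcal{O}_m(S)$, ordered by inclusion, form a finite poset. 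In a finite poset every element lies above (or equals) a minimal element, so for each $j$ there is $S_{i_j} \in \mathcal{I}_m(S)$ with $S_{i_j} \subseteq T_j$.

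The conclusion then follows by a short chain of inclusions. For every $j$ we have $S_1 \cap \cdots \cap S_n \subseteq S_{i_j} \subseteq T_j$, the first inclusion because $S_{i_j}$ is one of the $S_i$. Intersecting over $j$ gives
\[
S_1 \cap \cdots \cap S_n \subseteq T_1 \cap \cdots \cap T_k = S.
\]
Combining this with the trivial inclusion yields $S = S_1 \cap \cdots \cap S_n$, and since each $S_i$ is $m$-irreducible by construction, this is a decomposition of $S$ into $m$-irreducible numerical semigroups.

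The argument is entirely elementary, so I do not expect a serious obstacle; the two points that need care are (i) verifying that the semigroups $T_j$ produced by Proposition \ref{prop:2} really lie in $\mathcal{O}_m(S)$, i.e. that they share the multiplicity $m$, which is exactly what Definition \ref{def:1} guarantees, and (ii) making the ``descend to a minimal element'' step rigorous, for which finiteness of $\mathcal{O}_m(S)$ is the essential input. Note that minimality of $S_1 \cap \cdots \cap S_n$ as a decomposition is \emph{not} claimed here — Lemma \ref{lemma:7} only asserts that it \emph{is} a decomposition — which is why it suffices to compare against the single auxiliary decomposition $T_1 \cap \cdots \cap T_k$ rather than to analyze cardinalities.
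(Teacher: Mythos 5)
Your proposal is correct. The paper actually states Lemma \ref{lemma:7} without any proof, treating it as an immediate consequence of Proposition \ref{prop:2} and the definition of $\mathcal{I}_m(S)$; your argument (the trivial inclusion $S \subseteq S_1 \cap \cdots \cap S_n$, plus pushing an arbitrary decomposition $S = T_1 \cap \cdots \cap T_k$ from Proposition \ref{prop:2} down to minimal elements of the finite poset of $m$-irreducible oversemigroups, so that $S_1 \cap \cdots \cap S_n \subseteq S_{i_j} \subseteq T_j$ for each $j$) is exactly the standard completion of that implicit reasoning, with the finiteness of $\mathcal{O}_m(S)$ correctly identified as the ingredient that makes the descent to minimal elements legitimate.
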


Clearly, this basic decomposition is not ensured to be minimal since it may use redundant elements.

\begin{rem}
\label{rem:8}
Note that if $\hat{S}$ is a numerical semigroup with multiplicity $m$, by Proposition \ref{prop:4}, $\g(\hat{S}) = m-1$ if and only if $\hat{S}=\{0, m, \rightarrow\}$ ($\rightarrow$ denotes that every integer greater than $m$ belongs to $\hat{S}$).
  Hence, this $m$-irreducible numerical semigroup only appears in its own decomposition and in no one else.

This is due to the fact that $\hat{S}=\{0, m, \rightarrow\}$ is the maximal element in the set of numerical semigroups with multiplicity $m$, and then $\mathcal{O}_m(\hat{S})=\mathcal{I}_m(\hat{S})=\{\hat{S}\}$ (see \cite{blanco-rosales10} for further details).
\end{rem}

From now on, we assume that $S \neq \hat{S}=\{0, m, \rightarrow\}$ since by the above remark, the decomposition of $\hat{S}$ is trivial.

By Proposition \ref{prop:4} and Remark \ref{rem:8}, if $S\neq \hat{S}=\{0, m, \rightarrow\}$, its decomposition into $m$-irreducible numerical semigroups uses two types of numerical semigroups: those that are irreducible ($\g(S)=\left\lceil \dfrac{\F(S)+1}{2} \right\rceil$) and those that have genus equal the multiplicity of $S$.

 To refine the search of the elements in $\mathcal{I}_m(S)$, first, we need to introduce the notion of special gap.

\begin{defi}
\label{def:9}
Let $S$ be a numerical semigroup. The special gaps of $S$ are the elements in the following set:
$$
{\rm SG}(S) = \{ h \in {\rm G}(S): S \cup \{h\} \mbox{ is a numerical semigroup}\}
$$
where ${\rm G}(S)$ is the set of gaps of $S$.
\end{defi}
We denote by $\SG_m(S)=\{h \in \SG(S): h>m\}$. In \cite{blanco-rosales10}, the authors proved that $S$ is $m$-irreducible if and only if $\#\SG_m(S)\leq 1$ ($\#A$ stands for the cardinality of the set $A$). Moreover, $\SG_m(S)=\emptyset$ if and only if $S=\{0, m, \rightarrow\}$ (there are no gaps greater than $m$ in $S$).

Also, if we know the special gaps of a numerical semigroup, we can search for its decomposition by using the following result.

\begin{prop}[\cite{blanco-rosales10}]
\label{prop:11}
Let $S, S_1, \ldots, S_n$ be numerical semigroups with multiplicity $m$. $S=S_{1} \cap \cdots \cap S_{n}$ if and only if $\SG_m(S) \cap \left(\G(S_{1}) \cup \cdots \cup \G(S_{n})\right) = \SG_m(S)$.
\end{prop}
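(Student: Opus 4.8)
The plan is to first observe that the set equality $\SG_m(S) \cap (\G(S_1) \cup \cdots \cup \G(S_n)) = \SG_m(S)$ is merely a verbose restatement of the inclusion $\SG_m(S) \subseteq \G(S_1) \cup \cdots \cup \G(S_n)$, i.e. that every special gap of $S$ exceeding $m$ is a gap of at least one of the $S_i$. Throughout I use that the $S_i$ are oversemigroups of $S$, as is the case in the decomposition setting; this gives $S \subseteq S_i$ and hence $\G(S_i) \subseteq \G(S)$ for every $i$ (some hypothesis of this kind is genuinely needed, since for semigroups not containing $S$ the converse implication can fail). Since $S$ and each $S_i$ share the multiplicity $m$, the intersection $T := S_1 \cap \cdots \cap S_n$ is again a numerical semigroup, it contains $S$, and it has multiplicity $m$, because it contains $m$ and omits $1, \ldots, m-1$. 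Thus proving $S = T$ reduces to proving $T \subseteq S$.

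The forward implication is immediate. If $S = T$, then taking complements in $\Z_+$ yields $\G(S) = \G(S_1) \cup \cdots \cup \G(S_n)$; since special gaps are in particular gaps, $\SG_m(S) \subseteq \G(S) = \bigcup_i \G(S_i)$, which is the asserted inclusion. Note this direction does not even require the oversemigroup hypothesis.

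For the converse I would argue by contradiction: assume $\SG_m(S) \subseteq \bigcup_i \G(S_i)$ but $S \subsetneq T$. The central step is a ``one step up'' lemma, namely that when $S \subsetneq T$ there is a special gap of $S$ lying in $T$. I would exhibit it explicitly by setting $h := \max(T \setminus S)$, which exists because $T \setminus S \subseteq \G(S)$ is finite and, by assumption, nonempty. For any $s \in S \setminus \{0\}$ the element $h + s$ lies in $T$ and satisfies $h + s > h$, so $h + s \notin T \setminus S$ and therefore $h + s \in S$; similarly $2h \in T$ with $2h > h$ forces $2h \in S$. These are exactly the closure conditions ensuring that $S \cup \{h\}$ is a numerical semigroup, so $h \in \SG(S)$. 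Moreover $h \in T \setminus \{0\}$ gives $h \ge m$, and $h \notin S$ while $m \in S$ gives $h \ne m$; hence $h > m$ and thus $h \in \SG_m(S)$.

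To finish, observe that $h \in T = \bigcap_i S_i$ means $h \in S_i$ for every $i$, so $h \notin \G(S_i)$ for every $i$, whence $h \notin \bigcup_i \G(S_i)$, contradicting $h \in \SG_m(S) \subseteq \bigcup_i \G(S_i)$. Therefore $T \subseteq S$, and $S = S_1 \cap \cdots \cap S_n$. The only delicate point, and the main obstacle, is the one step up lemma; the clean choice $h = \max(T \setminus S)$ is precisely what makes the verification of the two closure conditions routine, whereas a ``minimal gap'' choice would not directly deliver them.
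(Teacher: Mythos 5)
Proposition \ref{prop:11} is stated in this paper as an imported result from \cite{blanco-rosales10} and is given no proof here, so there is no in-paper argument to compare yours against; I can only judge the proposal on its own merits, and on those it is correct. The reduction of the set equality to the inclusion $\SG_m(S) \subseteq \G(S_1) \cup \cdots \cup \G(S_n)$ is right, the forward direction via $\G(S_1 \cap \cdots \cap S_n) = \G(S_1) \cup \cdots \cup \G(S_n)$ is right, and your ``one step up'' lemma is the classical device of the subject: for $S \subsetneq T$ with $T$ a numerical semigroup containing $S$, the element $h = \max(T \setminus S)$ satisfies $h + s \in S$ for every $s \in S \setminus \{0\}$ and $2h \in S$, hence $S \cup \{h\}$ is a numerical semigroup and $h \in \SG(S)$; the refinement $h > m$ (because $h \in T \setminus \{0\}$ forces $h \geq m$, while $m \in S$ and $h \notin S$ force $h \neq m$) is exactly what places $h$ in $\SG_m(S)$ and produces the contradiction. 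Your caveat that the hypothesis $S \subseteq S_i$ is genuinely needed is also correct and worth stressing: as transcribed in this paper the proposition omits it, and without it the ``if'' direction is false. Two counterexamples: take $S = \{0, m, \rightarrow\}$, so that $\SG_m(S) = \emptyset$ and the right-hand side holds vacuously for any $S_1 \neq S$ of multiplicity $m$; or, non-vacuously, take $m = 4$, $S = \langle 4,5,11 \rangle$ with $\SG_4(S) = \{6,7\}$, and $S_1 = \langle 4,9,10,11 \rangle$, whose gap set $\{1,2,3,5,6,7\}$ contains $\{6,7\}$, yet $S \neq S_1$ since $5 \in S \setminus S_1$. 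The oversemigroup hypothesis is implicit in every use this paper makes of the proposition (the $S_i$ always lie in $\mathcal{O}_m(S)$, equivalently arise as undercoordinates $x - y$ of the Kunz-coordinates vector $x$ of $S$), so your added assumption recovers the intended statement rather than weakening the result.
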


From the above proposition, even if the minimal $m$-irreducible numerical semigroups are known, $\mathcal{I}_m(S)=\{S_1, \ldots, S_m\}$,
 some of these elements may be discarded when looking for a minimal m-irreducible decomposition, by checking  if there are redundant elements in the intersection
 $\SG_m(S) \cap \left(\G(S_{1}) \cup \cdots \cup \G(S_{n})\right)$.

Then, the key is to choose elements in $\mathcal{I}_m(S)$ that minimally cover the special gaps of $S$. To this for, we may solve a problem fixing each of the special gaps to be covered. Note that an upper bound  of the number of problems to be solved is the number of special gaps of a numerical semigroup that is bounded above by $m-1$ (see \cite{Ro-GS09}).

\begin{lem}
\label{lemma:12}
Let $S \neq \{0, m, \rightarrow\}$ be a numerical semigroup with multiplicity $m$, and $h \in \SG_m(S)$. Then, there exists a minimal decomposition of $S$ into $m$-irreducible numerical semigroups, $S=S_1 \cap \cdots \cap S_n$ such that, either $h = \F(S_i)$ for some $i$ or $h \not\in S_i$ for some $i$ such that there exists $h'\in \SG_m(S_i)$ with $\F(S_i) = h'>h$.
\end{lem}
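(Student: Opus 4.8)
The plan is to show that \emph{every} minimal decomposition already satisfies the required dichotomy, so it suffices to fix one and verify the property. A minimal decomposition exists: by Proposition~\ref{prop:2} some decomposition of $S$ into $m$-irreducible semigroups exists, and among all such decompositions one may choose one with the least number of factors, say $S = S_1 \cap \cdots \cap S_n$. The first step is to locate a factor in which $h$ is a gap. Since $h \in \SG_m(S)$, Proposition~\ref{prop:11} forces $\SG_m(S) \subseteq \G(S_1) \cup \cdots \cup \G(S_n)$, so there is an index $i$ with $h \in \G(S_i)$, that is, $h \notin S_i$. Because $h \in \SG_m(S)$ we have $h > m$, so $S_i$ has a gap strictly larger than $m$; hence $\SG_m(S_i) \neq \emptyset$, which by the characterization quoted after Definition~\ref{def:9} rules out $S_i = \{0, m, \rightarrow\}$.

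The heart of the argument, and the only genuinely nontrivial point, is to identify the unique element of $\SG_m(S_i)$. I would prove the auxiliary fact that for an $m$-irreducible numerical semigroup $T$ of multiplicity $m$ with $T \neq \{0, m, \rightarrow\}$ one has $\SG_m(T) = \{\F(T)\}$. Indeed, $m$-irreducibility gives $\#\SG_m(T) \leq 1$, while $T \neq \{0, m, \rightarrow\}$ gives $\SG_m(T) \neq \emptyset$, so $\SG_m(T)$ is a singleton. It then suffices to check that $\F(T)$ itself belongs to $\SG_m(T)$. Since $\F(T)$ is the largest gap, every integer above it lies in $T$, so $T \cup \{\F(T)\}$ is closed under addition and is a numerical semigroup; thus $\F(T) \in \SG(T)$. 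Moreover $\F(T) \neq m - 1$, for otherwise Proposition~\ref{prop:4}(1) would give $T = \{0, m, \rightarrow\}$, and $\F(T) \neq m$ because $m \in T$; hence $\F(T) > m$ and $\F(T) \in \SG_m(T)$. Therefore $\SG_m(T) = \{\F(T)\}$.

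Finally I would conclude by a case split. Applying the auxiliary fact to $T = S_i$ yields $\SG_m(S_i) = \{\F(S_i)\}$. Since $h$ is a gap of $S_i$ and $\F(S_i)$ is its largest gap, we have $h \leq \F(S_i)$. If $h = \F(S_i)$, we are in the first alternative. Otherwise $h < \F(S_i)$, and setting $h' := \F(S_i) \in \SG_m(S_i)$ we have simultaneously $h \notin S_i$, the equality $\F(S_i) = h'$, and $h' > h$, which is precisely the second alternative.

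In summary, the bulk of the work is the identification $\SG_m(T) = \{\F(T)\}$ for $m$-irreducible $T \neq \{0, m, \rightarrow\}$; everything else is an application of Proposition~\ref{prop:11} to place $h$ as a gap of some factor, together with an elementary comparison with the Frobenius number. I do not expect any serious obstacle beyond carefully checking that $\F(T)$ is itself a special gap and that it exceeds $m$, which is where the hypothesis $S_i \neq \{0, m, \rightarrow\}$ (equivalently $h > m$) is used.
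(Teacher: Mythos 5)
Your proof is correct, but it takes a genuinely different route from the paper's. The paper fixes a minimal decomposition (via Proposition \ref{prop:2}) and argues by repair: if the factor $S_i$ in which $h$ is a gap satisfies neither alternative, it replaces $S_i$ by $S_i \cup \{\F(S_i)\}$, which keeps $h$ among the gaps and leaves the number of factors unchanged, and it iterates this over all special gaps. You instead isolate the auxiliary fact that $\SG_m(T)=\{\F(T)\}$ for every $m$-irreducible $T \neq \{0,m,\rightarrow\}$ of multiplicity $m$ (combining $\#\SG_m(T)\leq 1$, $\SG_m(T)\neq\emptyset$, and $\F(T)\in\SG_m(T)$), and you conclude that every minimal decomposition --- indeed every decomposition into $m$-irreducibles --- already satisfies the dichotomy: the factor with $h\in\G(S_i)$ has $h\leq \F(S_i)$, and either equality holds or $h'=\F(S_i)$ witnesses the second alternative. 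Your route buys two things: it proves a stronger statement (the property holds for \emph{all} minimal decompositions, not just a constructed one), and it sidesteps the two points the paper's repair step leaves unjustified, namely that $S_i\cup\{\F(S_i)\}$ is again $m$-irreducible with multiplicity $m$ and that the iteration terminates. In fact, your auxiliary fact shows that the paper's ``assume'' case is vacuous: once $h\neq\F(S_i)$, the element $h'=\F(S_i)\in\SG_m(S_i)$ automatically satisfies $h'>h$, so the repair is never invoked. One small blemish: to rule out $\F(T)=m-1$ you cite Proposition \ref{prop:4}(1), whose hypothesis is $\F(S)=\g(S)=m-1$ rather than just $\F(S)=m-1$; the clean argument is simply that multiplicity $m$ forces $\{1,\dots,m-1\}\subseteq \G(T)$, so $\F(T)=m-1$ would make the gap set exactly $\{1,\dots,m-1\}$, i.e.\ $T=\{0,m,\rightarrow\}$.
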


\begin{proof}
By Proposition \ref{prop:2}, there exists a minimal decomposition of $S$ into $m$-irreducible numerical semigroup, $S=S_1 \cap \cdots \cap S_k$. By applying Proposition \ref{prop:11}, this decomposition must verify that $\SG_m(S) \cap \left(\G(S_{1}) \cup \cdots \cup \G(S_{n})\right) = \SG_m(S)$. Each special gap $h\in \SG_m(S)$ must be in $\G(S_i)$ for some $i=1, \ldots, n$. Assume that $h \neq \F(S_i)$ and that for all $h' \in \SG_m(S_i)$ with $h'>h$, $\F(S_i)\neq h'$. Then, $S_i' = S_i \cup \{\F(S_i)\}$ is a $m$-irreducible numerical semigroup such that  $\SG_m(S) \cap \left(\G(S_{1}) \cup \cdots \G(S_i') \cdots \cup \G(S_{n})\right) = \SG_m(S)$. Then, we have obtained a different minimal decomposition. (Note that it has the same number of terms than the original one.)

By repeating this procedure for each $h\in \SG_m(S)$ while it is possible, we find a minimal decomposition of $S$ fulfilling the conditions of the lemma.
\end{proof}

\section{The Kunz-coordinates vector}
\label{sec:2}
The analysis done through this paper uses mathematical programming tools to solve the problem of decomposing a numerical semigroup
into $m$-irreducible numerical semigroups.
For the sake of translating the problem to a discrete optimization problem, we use an alternative encoding of numerical semigroups different from the system of generators.
We identify each numerical semigroup with multiplicity $m$ with a nonnegative integer vector with $m-1$ coordinates,
 where $m$ is the multiplicity of the semigroup. To describe this identification we first need to give the notion of Ap\'ery set of a numerical semigroup with respect to its multiplicity

\begin{defi}
\label{def:13}
Let $S$ be a numerical semigroup with multiplicity $m$. The \emph{Ap\'ery set} of $S$ with respect to $m$ is the set $\Ap(S,m) = \{s
\in S :  s - m \not\in S\}$.
\end{defi}

However we are interested in the following characterization  of the Ap\'ery set (see \cite{Ro-GS09}): Let $S$ be a numerical semigroup with multiplicity $m$, then $\Ap(S,m) = \{0 = w_0, w_1, \ldots,  w_{m -
1}\}$, where $w_i$ is the least
element in $S$ congruent with $i$ modulo $m$, for $i=1, \ldots, m-1$.

Moreover, the set $\Ap(S,m)$ completely determines $S$, since $S =
\langle \Ap(S,m) \cup \{m\} \rangle$ (see \cite{rosales02-london}), and then, we can identify $S$ with its Ap\'ery set with respect to its multiplicity. Moreover, the set $\Ap(S,m)$
contains, in general, more information than an arbitrary system of
generators of $S$. For instance, Selmer in \cite{selmer77} gives the formulas,
$\g(S)=\frac{1}{m}\left(\sum_{w \in \Ap(S, m)} w\right) -
\frac{m-1}{2}$ and $\F(S) = \max(\Ap(S,m)) - m$. Moreover, one can test if a nonnegative integer $s$ belongs to $S$ by checking if $w_{s\pmod m} \leq s$.
The notion of Ap\'ery set is also given when we consider any $n\in S$ instead of $m$, rewriting then the definition adequately  (see \cite{Ro-GS09}). Moreover, the smallest Ap\'ery set is $\Ap(S, m)$.

We consider an slightly but useful modification of the Ap\'ery set that we call the \emph{Kunz-coordinates vector}.

\begin{defi}[Kunz-coordinates]
\label{def:14}
Let $S$ be a numerical semigroup with multiplicity $m$. If $\Ap(S, m)=\{w_0=0, w_1, \ldots, w_{m-1}\}$, with $w_i$ congruent with $i$ modulo $m$, the \emph{Kunz-coordinates vector} of $S$ is the vector $x \in \Z^{m-1}_+$ with components $x_i = \frac{w_i-i}{m}$ for $i=1, \ldots, m-1$.

We say that $x\in \Z^{m-1}_+$ is a Kunz-coordinates vector (or Kunz-coordinates, for short) if there exists a numerical semigroup whose Kunz-coordinates vector is $x$.
\end{defi}

From the Kunz-coordinates we can reconstruct the Ap\'ery set. If $x \in \Z^{m-1}_+$  is the Kunz-coordinates vector of $S$, $\Ap(S, m) = \{mx_i + i: i = 1, \ldots, m-1\} \cup \{0\}$. Consequently, $S$ can be reconstructed from its Kunz-coordinates.

The Kunz-coordinates vectors have been implicitly used in \cite{kunz} and \cite{rosales02-london} to characterize numerical semigroups with fixed multiplicity, and used in \cite{counting} to count numerical semigroups with a given genus.

Furthermore, if $S$ is a numerical semigroup with multiplicity $m$ and $x \in \Z^{m-1}_+$ are its Kunz-coordinates, from Selmer's formulas, it is easy to compute its genus and its Frobenius number as follows:
\begin{itemize}
 \item $\g(S) = \dsum_{i=1}^{m-1} x_i$.
\item $\F(S) = \max_i\{mx_i+i\}-m$. (Clearly, if the maximum is reached in the $i$-th component, $\F(S) \equiv i \pmod m$)
\end{itemize}

The following result that appears in \cite{rosales02-london} allows us to manipulate numerical semigroups with multiplicity $m$ as integer points inside a polyhedron.

\begin{theo}[Theorem 11 in \cite{rosales02-london}]
\label{theo:15}
Each numerical semigroup is one-to-one identified with its Kunz-coordinates.

Furthermore, the Kunz-coordinates vectors of the set of numerical semigroups with multiplicity $m$ is the set of solutions of the following system of diophantine inequalities:
\begin{align*}
x_i  \geqslant&1 & \mbox{for all $i \in \{1, \ldots, m-1\}$,}\\
x_i+x_j-x_{i+j} \geqslant& 0 & \mbox{for all $1 \leqslant i \leqslant j \leqslant m-1$, $i+j \leqslant m-1$,}\\
x_i+x_j-x_{i+j-m}  \geqslant& -1 &\mbox{for all $1 \leqslant i \leqslant j \leqslant m-1$, $i+j > m$},\\
x_i \in \Z_+ & &\mbox{for all $i \in \{1, \ldots, m-1\}$}.
\end{align*}
\end{theo}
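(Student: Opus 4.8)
The plan is to prove both assertions by reducing the numerical-semigroup axioms to purely arithmetic conditions on the Apéry elements $w_i$, and then translating those conditions through the substitution $w_i = m x_i + i$.

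First I would dispatch the one-to-one identification. Given a numerical semigroup $S$ with multiplicity $m$, its Apéry set $\Ap(S,m) = \{0, w_1, \ldots, w_{m-1}\}$ is uniquely determined, and hence so is the vector $x$ with $x_i = (w_i-i)/m$. Conversely, from $x$ one recovers $w_i = m x_i + i$, the full Apéry set, and then $S = \langle \Ap(S,m) \cup \{m\}\rangle$ as recalled before the statement. Thus $S \mapsto x$ is injective with an explicit inverse, which is exactly the claimed bijection.

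For the characterization, the key observation is that $S = \{w_i + km : i \in \{0, \ldots, m-1\},\ k \in \Z_+\}$, since every element of $S$ congruent to $i$ modulo $m$ is at least $w_i$ and differs from it by a multiple of $m$. As $S$ is automatically closed under adding $m$, the only nontrivial part of additive closure is that $w_i + w_j \in S$ for all $i,j$. Writing $r = (i+j) \bmod m$, the element $w_i+w_j$ (which is congruent to $r$) lies in $S$ exactly when $w_i + w_j \geq w_r$. Here I would split into the case $i+j \leq m-1$, where $r = i+j$, and the case $i+j \geq m$, where $r = i+j-m$ and in particular $r=0$, $w_0 = 0$ when $i+j=m$ yields a vacuous inequality. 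Substituting $w_i = m x_i + i$ into $w_i + w_j \geq w_r$ and cancelling the common linear terms turns these inequalities into precisely $x_i + x_j - x_{i+j} \geq 0$ and $x_i + x_j - x_{i+j-m} \geq -1$; the symmetry $w_i + w_j = w_j + w_i$ lets me restrict to $i \leq j$, while the pairs with $i=0$ or $j=0$ reduce to $w_j \in S$ and are automatic, which accounts for the stated index ranges. The multiplicity requirement $\m(S)=m$ is equivalent to $i \notin S$ for every $1 \leq i \leq m-1$; since $w_i \geq i$ with equality iff $i \in S$, this says $w_i \geq i+m$, i.e.\ $x_i \geq 1$.

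Finally, for the converse I would take an arbitrary integer vector $x$ satisfying the four families of constraints, set $w_0 = 0$ and $w_i := m x_i + i$, and define $S := \{0\} \cup \{w_i + km : i,k\}$. The two inequality families guarantee $w_i + w_j \geq w_{(i+j)\bmod m}$, which by the computation above gives additive closure; containment of $0$ and cofiniteness of $\Z_+ \setminus S$ are immediate because each residue class eventually lies in $S$; and $x_i \geq 1$ forces the least nonzero element to be $m$, so $\m(S)=m$. The step I expect to be the main obstacle is organizing the case analysis on $i+j$ relative to $m$ cleanly and verifying that the boundary case $i+j=m$ contributes no genuine constraint, so that the displayed system is shown to be simultaneously necessary and sufficient.
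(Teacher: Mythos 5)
Your proposal is correct. Note that the paper itself gives no proof of this statement: it is imported verbatim as Theorem~11 of the cited reference of Rosales, Garc\'ia-S\'anchez, Garc\'ia-Garc\'ia and Branco, so there is nothing internal to compare against; your argument is essentially the standard one from that source. The reduction of additive closure to the finitely many conditions $w_i + w_j \geq w_{(i+j)\bmod m}$ on Ap\'ery elements, the observation that the case $i+j=m$ is vacuous because $w_0=0$, the translation of $\m(S)=m$ into $x_i\geq 1$, and the converse construction of $S$ as the union of the arithmetic progressions $w_i + m\Z_+$ are all sound and complete. The only loose end worth one explicit sentence is at the very end of the sufficiency direction: having built $S$ from a solution $x$, you should record that the elements of $S$ in the residue class $i$ are exactly $w_i + m\Z_+$, so that $\Ap(S,m)=\{0,w_1,\ldots,w_{m-1}\}$ and the Kunz-coordinates vector of the constructed semigroup really is $x$ (this is what makes the solution set equal to, and not merely contain, the set of Kunz-coordinates vectors); this is immediate from your definition of $S$, since $w_j + km \equiv i \pmod m$ forces $j=i$.
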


From Theorem \ref{theo:15} and Selmer formulas, we can identify all the numerical semigroups (in terms of their Kunz-coordinates vector) with multiplicity $m$, genus $g$ and Frobenius number $F$  with the solutions of this system of diophantine inequalities:
\begin{align*}
x_i  \geqslant&1 & \mbox{for all $i \in \{1, \ldots, m-1\}$,}\nonumber\\
x_i+x_j-x_{i+j} \geqslant& 0 & \mbox{for all $1 \leqslant i \leqslant j \leqslant m-1$, $i+j \leqslant m-1$,}\nonumber\\
x_i+x_j-x_{i+j-m}  \geqslant& -1 &\mbox{for all $1 \leqslant i
\leqslant j \leqslant m-1$, $i+j > m$,}\\
\dsum_{i=1}^{m-1} x_i =& g &\\
F  =& \max_i\;\{mx_i+i\}- m ,\nonumber\\
x_i &\in \Z_+ &\mbox{for all $i \in \{1, \ldots, m-1\}$}.\nonumber
\end{align*}

From the above formulation and Corollary \ref{cor:5}, the set of $m$-irreducible numerical semigroups is completely determined by the solutions of the following disjunctive diophantine system of inequalities and equations:
\begin{align*}
x_i  &\geqslant 1 & \mbox{for all $i \in \{1, \ldots, m-1\}$,}\nonumber\\
x_i+x_j-x_{i+j} &\geqslant 0  &\mbox{for all $1 \leqslant i \leqslant j \leqslant m-1$, $i+j \leqslant m-1$,}\nonumber\\
x_i+x_j-x_{i+j-m}  &\geqslant -1 &\mbox{for all $1 \leqslant i
\leqslant j \leqslant m-1$, $i+j > m$,}\\
\dsum_{i=1}^{m-1}& x_i \in \{m-1, m, \max_i\;\{mx_i+i\}- m\} \\
x_i \in& \Z_+ &\mbox{for all $i \in \{1, \ldots, m-1\}$}.\nonumber
\end{align*}

The following result characterizes the set of oversemigroups of a numerical semigroup, in term of its Kunz-coordinates vector.
\begin{prop}
\label{prop:16}
 Let $S$ be a numerical semigroup with multiplicity $m$ and $x \in \Z^{m-1}_+$ its Kunz-coordinates. Then, the set of Kunz-coordinates vectors of oversemigroups of $S$ with multiplicity $m$ is:
$$
\{x' \in \Z^{m-1}_+: x' \mbox{ is a Kunz-coordinates vector and } x' \leq x\}
$$
where $\leq$ denotes the componentwise order in $\Z^{m-1}$.
\end{prop}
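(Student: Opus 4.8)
The plan is to prove the claimed set equality as a double implication, passing through the intermediate description of oversemigroups in terms of Ap\'ery sets. The starting observation is that, by Theorem \ref{theo:15}, a vector $x' \in \Z^{m-1}_+$ is a Kunz-coordinates vector exactly when it is the Kunz-coordinates vector of some numerical semigroup $S'$ with multiplicity $m$; moreover the constraint $x_i' \geq 1$ built into that system forces $w_i' := m x_i' + i > m$ for every $i \geq 1$, so that such an $S'$ automatically has multiplicity $m$. Hence the membership of the semigroup attached to $x'$ in $\mathcal{O}_m(S)$ reduces to the single condition $S \subseteq S'$, and the whole proposition reduces to showing that, for numerical semigroups $S$ and $S'$ of multiplicity $m$ with Kunz-coordinates $x$ and $x'$, one has $S \subseteq S'$ if and only if $x' \leq x$.

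The key tool is the membership test recalled after Selmer's formulas: a nonnegative integer $s$ belongs to a numerical semigroup $T$ of multiplicity $m$ if and only if $w^T_{s \bmod m} \leq s$, where $\{w_0^T = 0, w_1^T, \ldots, w_{m-1}^T\}$ is its Ap\'ery set with respect to $m$, that is, $w_i^T$ is the least element of $T$ congruent to $i$ modulo $m$. First I would use this to translate containment of semigroups into componentwise domination of Ap\'ery vectors, claiming that $S \subseteq S'$ holds if and only if $w_i' \leq w_i$ for every $i \in \{1, \ldots, m-1\}$. For the forward direction, each $w_i$ lies in $S$, hence in $S'$, and $w_i \equiv i \pmod m$; since $w_i'$ is by definition the least element of $S'$ in that residue class, $w_i' \leq w_i$ follows. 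For the converse, take any $s \in S$ and let $i = s \bmod m$: if $i = 0$ then $s$ is a nonnegative multiple of $m \in S'$, while if $i \neq 0$ then $s \geq w_i \geq w_i'$, so the membership test yields $s \in S'$.

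Finally I would substitute $w_i = m x_i + i$ and $w_i' = m x_i' + i$, so that $w_i' \leq w_i$ is equivalent to $x_i' \leq x_i$; ranging over all $i$ this is exactly $x' \leq x$ in the componentwise order, which closes the equivalence and hence the proposition. The only point requiring care, the main obstacle such as it is, is making sure that the abstract notion \emph{Kunz-coordinates vector} in the statement is correctly identified with \emph{numerical semigroup of multiplicity $m$}: one must check that an $x'$ ranging over $\Z^{m-1}_+$ and satisfying the Kunz system cannot produce an oversemigroup of multiplicity strictly smaller than $m$, which is precisely what the inequalities $x_i' \geq 1$ (equivalently $w_i' > m$ for $i \geq 1$) guarantee. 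With this in place the argument is a clean double implication driven entirely by the Ap\'ery-set membership criterion.
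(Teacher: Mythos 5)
Your proof is correct, and its core mechanism---translating containment of semigroups into componentwise domination of Ap\'ery sets, using that each $w_i$ is the minimum element of its residue class---is exactly the paper's. The difference is one of completeness: the paper's proof establishes only the forward inclusion, namely that the Kunz-coordinates of any $S' \in \mathcal{O}_m(S)$ satisfy $x' \leq x$; it never argues the converse, that every Kunz-coordinates vector $x' \leq x$ actually corresponds to an oversemigroup of $S$ with multiplicity $m$, which is needed for the claimed set equality. You supply precisely that missing half: via the membership test $s \in T \iff w^{T}_{s \bmod m} \leq s$, you show that $w_i' \leq w_i$ for all $i$ forces $S \subseteq S'$, treating the residue class of $0$ separately (nonnegative multiples of $m$). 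You also make explicit a point the paper leaves implicit, namely that a Kunz-coordinates vector in $\Z_+^{m-1}$ automatically encodes a semigroup of multiplicity exactly $m$, because the constraints $x_i' \geq 1$ give $w_i' = m x_i' + i > m$ for all $i \geq 1$. So your write-up is, if anything, a more complete version of the paper's argument; what the paper's shorter proof buys is brevity, at the cost of leaving one direction of the equality to the reader.
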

\begin{proof}
Let $S' \in \mathcal{O}_m(S)$, and $\Ap(S', m) = \{0, w_1', \ldots, w'_{m-1}\}$. Let $\Ap(S,m) = \{0, w_1, \ldots, w_{m-1}\}$. The $i$th element in the Ap\'ery set is characterized of being the minimum element in the semigroup that is congruent with $i$ modulo $m$. Thus, $w_i' \leq w_i$ for all $i=1,  \ldots, m-1$, since $S \subseteq S'$. Then $x'_i = \frac{w'_i-i}{m} \leq \frac{w_i-i}{m} = x_i$ for all $i=1, \ldots, m-1$. Hence, $x' \leq x$.
\end{proof}

Therefore, the oversemigroups of $S$ can be identified with the ``\emph{undercoordinates}'' of its Kunz-coordinates.

For the ease of presentation, we identify a numerical semigroup with multiplicity $m$ with an integer vector with $m-1$ coordinates, its Kunz-coordinates. All the notions previously given for numerical semigroups are adapted conveniently by using the following notation. If $S$ is a numerical semigroup and $x\in \Z^{m-1}$ is its Kunz-coordinates vector, we denote:
\begin{itemize}
\item $\m(x)=\m(S)=m$ (Multiplicity of $x$).
\item $\F(x)=\F(S)$ (Frobenius number).
\item $\G(x)=\G(S)=\{n \in \Z: mx_{n \pmod m}+ n\pmod m > n\}$ (Gaps of $x$).
\item $\g(x)=\g(S)$ (Genus of $x$).
\item $\SG(x)=\SG(S)$ (Special Gaps of $x$).
\item $\SG_m(x)=\SG_m(S)$ (Special Gaps greater than $m$ of $x$).
\item $\mathcal{U}_m(x)= \{x' \in \Z^{m-1}: \text{ $x'$ is a Kunz-coordinates vector and } x'\le x\}$ (Undercoordinates of $x$). Observe that if $x$ is the Kunz-coordinates vector of $S$, $x' \in \mathcal{U}_m(x)$ is univocally identified with an element $S'\in \mathcal{O}_m(S)$ (Proposition \ref{prop:16}).
\item $\Ap(x) = \Ap(S, m) = \{0\} \cup \{mx_i + i: i=1, \ldots, m-1\}$ (Ap\'ery set of $x$).
\end{itemize}
Note that all the above indices and sets can be computed by using only the Kunz-coordinates vector of the semigroup.

As assumed above, we consider that $S\neq \{0, m, \rightarrow\}$. In terms of the Kunz-coordinates, it is equivalent to say that $x\neq (1, \ldots, 1)\in \Z^{m-1}_+$ (or $\dsum_{i=1}^{m-1} x_i \geq m$).

By Corollary \ref{cor:5} we say that a Kunz-coordinates vector, $x \in \Z_+^{m-1}$ is $m$-irreducible if $\g(x) \in \{m, m-1, \left\lceil\frac{\F(x)+1}{2}\right\rceil\}$. Furthermore, we say that $x$ is irreducible if $\g(x)=\left\lceil\frac{\F(x)+1}{2}\right\rceil$.
  Hence, every irreducible Kunz-coordinates vector in $\Z^{m-1}_+$ is $m$-irreducible, but the converse is not true in general.

We also say that a set of Kunz-coordinates vectors, $D=\{x^1, \ldots, x^k\} \subseteq \Z^{m-1}_+$, is a decomposition of $x\in \Z^{m-1}_+$ into $m$-irreducible Kunz-coordinates vectors if the semigroups associated with the elements in $D$ give a decomposition into $m$-irreducible numerical semigroups of the semigroup identified with $x$. Equivalently, by Proposition \ref{prop:11}, $D$ is a decomposition of $x\in \Z^{m-1}_+$ into $m$-irreducible Kunz-coordinates vectors if $x^i$ is an $m$-irreducible Kunz-coordinates vector and $\SG_m(x) = \SG_m(x) \cap \left(\G(x^1) \cup \cdots \cup \G(x^k)\right)$.

Then, a minimal decomposition $x\in \Z^{m-1}_+$ into $m$-irreducible Kunz-coordinates is a decomposition into $m$-irreducible Kunz-coordinates, $D=\{x^1, \ldots, x^k\} \subseteq \Z^{m-1}_+$, with minimum cardinality.

We define
$$
\begin{array}{rl} \mathcal{I}_m(x)= & \{x' \in \mathcal{U}_m(x): x' \text{is $m$-irreducible and } \not\exists \text{ a $m$-irreducible Kunz-coordinates}\\ & \text { vector }  x^* \in \mathcal{U}_m(x)  \text{ such that } x^* \ge x'\} \end{array}
$$
$\mathcal{I}_m(x)$ is one-to-one identified with $\mathcal{I}_m(S)$.

\section{\emph{m}-irreducible Kunz-coordinates vectors}
\label{sec:3}
In this section we give necessary and sufficient conditions for a undercoordinate of a Kunz-coordinates vector to be $m$-irreducible.

Let $x \in \Z^{m-1}_+$ be a Kunz-coordinates vector. By the above definition, a Kunz-coordinates vector $x' \in \mathcal{U}_m(x)$ if and only if there exists $y \in \Z^{m-1}_+$ such that $x'+ y = x$.

By applying Theorem \ref{theo:15} to $x' = x-y$, $y \in \Z^{m-1}_+$ must verify the following inequalities:
\begin{align*}
y_i \leqslant x_i - 1 & \mbox{ for all $i \in \{1, \ldots, m-1\}$,}\\
y_i + y_j - y_{i+j} \leqslant x_i + x_j - x_{i+j} & \mbox{ for all $1 \leqslant i \leqslant j \leqslant m-1$, $i+j \leqslant m-1$,}\nonumber\\
y_i + y_j - y_{i+j} \leqslant x_i + x_j - x_{i+j} + 1 &\mbox{ for all $1 \leqslant i \leqslant j \leqslant m-1$, $i+j > m$}
\end{align*}

Actually, if we are searching for those $x'=x-y$ that are identified with a set of $m$-irreducible undercoordinates decomposing $x$, we can restrict, by Corollary \ref{cor:5}, to consider those with genus $m$, $m-1$ and $\left\lceil \dfrac{\F(x) +1}{2} \right\rceil$. Therefore, $y$ must be a solution of the following system:
\begin{align}
y_i \leqslant x_i - 1 & \mbox{ for all $i \in \{1, \ldots, m-1\}$,}\nonumber\\
y_i + y_j - y_{i+j} \leqslant x_i + x_j - x_{i+j} & \mbox{ for all $1 \leqslant i \leqslant j \leqslant m-1$, $i+j \leqslant m-1$,}\nonumber\\
y_i + y_j - y_{i+j} \leqslant x_i + x_j - x_{i+j} + 1 &\mbox{ for all $1 \leqslant i
\leqslant j \leqslant m-1$, $i+j > m$},\label{polytope:x}\tag{${\rm P}^m(x)$}\\
\dsum_{i=1}^{m-1} y_i \in M(x, y),\label{eq:disj}\\
y \in \Z^{m-1}_+.\nonumber
\end{align}
where $M(x, y) = \{\dsum_{i=1}^{m-1} x_i -m, \dsum_{i=1}^{m-1} x_i -m + 1, \dsum_{i=1}^{m-1} x_i - \left\lceil \dfrac{\max_i\{m(x_i-y_i) + i\} - m +1}{2} \right\rceil\}$.

Recall that the Kunz-coordinates vector $(1, \ldots, 1) \in \Z^{m-1}_+$ is not considered because it corresponds to $S=\{0, m, \rightarrow\}$ that is  $m$-irreducible, and then, its minimal decomposition is itself (Remark \ref{rem:8}). Clearly, these coordinates are the unique solution of the above system when constraint \eqref{eq:disj} is $\dsum_{i=1}^{m-1} y_i= \dsum_{i=1}^{m-1} x_i -m$.

In the next subsections we analyze the remaining two cases for the disjunctive constraint \eqref{eq:disj}.

\subsection{\emph{m}-irreducible undercoordinates that are irreducible}
Let $x \in \Z^{m-1}_+$ be a Kunz-coordinates vector. First, we want to find those $m$-irreducible undercoordinates of $x$ that are also irreducible. Then, in system \eqref{polytope:x}, equation \eqref{eq:disj} is
\begin{equation}
\label{eq:irreducible}
\dsum_{i=1}^{m-1} y_ i = \dsum_{i=1}^{m-1} x_i - \left\lceil \dfrac{\max_i\{m(x_i-y_i) + i\} - m +1}{2} \right\rceil.
\end{equation}

Denote now by ${\rm H}_k^m(x)= \{y \in \R^{m-1}: \max_i\{m(x_i-y_i)+i\} = m(x_k-y_k) + k\}$, and by ${\rm P}_k^m(x) = {\rm P}(x) \cap {\rm H}_k^m(x)$ for all $k=1, \ldots, m-1$. Note that ${\rm H}_k^m(x)$ is the hyperplane in $\R^{m-1}$ where the Frobenius number of $x-y$ is reached in the $kth$ component (recall that $\F(x)=\max\{mx_i+i\} - m$), that is, $\F(x-y) = m(x_k-y_k)+k-m$.

With this assumptions, ${\rm P}_k^m(x)$ can be described by the following system of inequalities:
{\begin{align}
\;y_i \leqslant& x_i - 1  \mbox{ for all $i \in \{1, \ldots, m-1\}$,}\nonumber\\
\,y_i +& y_j - y_{i+j} \leqslant x_i + x_j - x_{i+j}  \mbox{ for all $1 \leqslant i \leqslant j \leqslant m-1$, $i+j \leqslant m-1$,}\nonumber\\
\,y_i +& y_j - y_{i+j} \leqslant x_i + x_j - x_{i+j} + 1 \mbox{ for all $1 \leqslant i
\leqslant j \leqslant m-1$, $i+j > m$,}\label{polytope:x0}\tag{${\rm P}_k^m(x)$}\\
\,\dsum_{i=1}^{m-1}& y_i = \dsum_{i=1}^{m-1} x_i - \left\lceil \dfrac{m(x_k-y_k) + k - m +1}{2} \right\rceil,\nonumber\\
\,y \in& \Z^{m-1}_+.\nonumber
\end{align}}

Or equivalently (using that $z \leq \lceil z \rceil < z+1$ for any $z \in \R$) by a linear system of inequalities as:

{\begin{align}
y_i& \leqslant x_i - 1 & \mbox{ for all $i \in \{1, \ldots, m-1\}$,}\nonumber\\
y_i& + y_j - y_{i+j} \leqslant x_i + x_j - x_{i+j} & \mbox{ for all $1 \leqslant i \leqslant j \leqslant m-1$, $i+j \leqslant m-1$,}\nonumber\\
y_i& + y_j - y_{i+j} \leqslant x_i + x_j - x_{i+j} + 1 &\mbox{ for all $1 \leqslant i
\leqslant j \leqslant m-1$, $i+j > m$,}\label{polytope:x2}\tag{${\rm P}_k^m(x)$}\\
2\,&\dsum_{i=1}^{m-1} y_i - my_k \geqslant  2\dsum_{i=1}^{m-1} x_i - mx_k - k +m -2,\nonumber\\
2\,&\dsum_{i=1}^{m-1} y_i - my_k \leqslant  2\dsum_{i=1}^{m-1} x_i - mx_k - k +m -1,\nonumber\\
y\;&\in \Z^{m-1}_+.\nonumber
\end{align}}

\subsection{\emph{m}-irreducible undercoordinates with genus \emph{m}}

In what follows, we describe the second type of $m$-irreducible undercoordinates of $S$, those with genus $m$.

Denote by ${\rm HG}^m(x) = \{y \in \R^{m-1}: \dsum_{i=1}^{m-1} y_i = \dsum_{i=1}^{m-1} x_i - m\}$ and ${\rm P}^m_m(x) = {\rm P}^m(x) \cap {\rm HG}^m(x)$. This set is described by the following system of diophantine inequalities:
\begin{align}
y_i \leqslant x_i - 1 & \mbox{ for all $i \in \{1, \ldots, m-1\}$,}\nonumber\\
y_i + y_j - y_{i+j} \leqslant x_i + x_j - x_{i+j} & \mbox{ for all $1 \leqslant i \leqslant j \leqslant m-1$, $i+j \leqslant m-1$,}\nonumber\\
y_i + y_j - y_{i+j} \leqslant x_i + x_j - x_{i+j} + 1 &\mbox{ for all $1 \leqslant i
\leqslant j \leqslant m-1$, $i+j > m$}\label{polytope:xm}\tag{${\rm P}^m_m(x)$}\\
\dsum_{i=1}^{m-1} y_i = \dsum_{i=1}^{m-1} x_i - m\label{eq:m}\\
y \in \Z^{m-1}_+.\nonumber
\end{align}

The solutions of system \eqref{polytope:xm} are easily identified by the few possible choices for the solutions of equation \eqref{eq:m} (the integer vector $x-y \in \Z^{m-1}$ has positive coordinates and the sum of them must be $m$). Actually, the entire set of solutions of \eqref{polytope:xm} is:
$$
\{x-(1, \ldots, 1)-\e_j: x_j \geqslant 2, j=1, \ldots, m-1\} \subseteq \Z^{m-1}_+
$$
where $\e_j$ is the $j$th unit vector in $\Z^{m-1}_+$.

Then, the set of $m$-irreducible undercoordinates of $x$ with genus $m$ is given by the set $\{(1, \ldots, 1) + \e_j: x_j \geqslant 2, j=1, \ldots, m-1\} \subseteq \Z^{m-1}_+$.
\section{Decomposing into \emph{m}-irreducible numerical semigroups}
\label{sec:4}
In the section above we characterize the $m$-irreducible undercoordinates of a Kunz-coordinates vector $x\in \Z_+^{m-1}$.
 In what follows, we use these characterizations to find a decomposition of $x$ into $m$-irreducible Kunz-coordinates vectors.
 We first give some decompositions that are not minimal in general by enumerating the whole set of solutions of the systems
\eqref{polytope:x2} and \eqref{polytope:xm}. After that we provide a multiobjective integer programming model to obtain the set
of minimal elements in $\mathcal{I}_m(x)$. We prove that this model is equivalent to enumerate the entire set of optimal solutions
of some single-objective integer programming problems. Thus, a minimal decomposition can be obtained from the former set of solutions
by solving a set covering problem. Finally, we propose a heuristic methodology based on the abovementioned exact approach to obtain a (minimal) decomposition of $x$ into $m$-irreducible Kunz-coordinates vectors.

 As a consequence of Corollary \ref{cor:5} and the comments above we obtain the following result that states how to get a decomposition into $m$-irreducible Kunz-coordinates vectors by solving several systems of diophantine inequalities.
\begin{prop}
\label{prop:17}
Let $x\in \Z_+^{m-1}$ be a Kunz-coordinates vector. Any decomposition of $x$ into $m$-irreducible Kunz-coordinates vectors is given by some elements in the form $x-y$ where $y$ belongs to the union of the solutions of the systems ${\rm P}^m_1(x), \ldots, {\rm P}^m_{m-1}(x)$ and ${\rm P}^m_m(x)$.
\end{prop}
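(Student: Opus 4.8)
The plan is to start from an arbitrary decomposition $D=\{x^1,\ldots,x^k\}$ of $x$ into $m$-irreducible Kunz-coordinates vectors and to show, term by term, that each $x^i$ is of the form $x-y^i$ with $y^i$ a solution of one of ${\rm P}^m_1(x),\ldots,{\rm P}^m_{m-1}(x),{\rm P}^m_m(x)$. The two ingredients are: locating each $x^i$ inside the undercoordinates $\mathcal{U}_m(x)$, which already forces the common difference inequalities on $y^i$; and using the genus trichotomy of Corollary~\ref{cor:5} to decide which additional (in)equality $y^i$ satisfies. Note that the special-gaps covering condition of Proposition~\ref{prop:11} plays no role in this direction: it characterises which collections $\{x-y\}$ actually recompose $x$, whereas here we only need to control each individual term.

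First I would check that every $x^i$ is an undercoordinate of $x$. Writing $S_i$ for the numerical semigroup associated with $x^i$, the identity $S=S_1\cap\cdots\cap S_k$ yields $S\subseteq S_i$, and $\m(S_i)=m$ because $x^i$ is $m$-irreducible; hence $S_i\in\mathcal{O}_m(S)$ and Proposition~\ref{prop:16} gives $x^i\le x$. Setting $y^i:=x-x^i\in\Z^{m-1}_+$ we obtain $x^i=x-y^i$. Since $x-y^i$ is itself a Kunz-coordinates vector, applying Theorem~\ref{theo:15} to it and rewriting the constraints in terms of $y^i$ (exactly the computation opening Section~\ref{sec:3}) shows that $y^i$ satisfies the three families of difference inequalities shared by ${\rm P}^m(x)$, ${\rm P}^m_k(x)$ and ${\rm P}^m_m(x)$.

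Next I would feed the $m$-irreducibility of $x^i$ into Corollary~\ref{cor:5}, which forces $\g(x^i)\in\{m-1,m,\lceil(\F(x^i)+1)/2\rceil\}$, and split into cases. If $\g(x^i)=m$, then $\sum_{j}(x_j-y^i_j)=m$, i.e. $y^i$ obeys equation~\eqref{eq:m}, so $y^i$ solves ${\rm P}^m_m(x)$. If $\g(x^i)=\lceil(\F(x^i)+1)/2\rceil$, I would fix an index $k^*$ attaining $\max_i\{m(x_i-y^i_i)+i\}$, so that $y^i\in{\rm H}^m_{k^*}(x)$ and $\F(x^i)=m(x_{k^*}-y^i_{k^*})+k^*-m$; substituting this value of $\F(x^i)$ into the genus equation turns it into the defining genus constraint of ${\rm P}^m_{k^*}(x)$, whence $y^i$ solves ${\rm P}^m_{k^*}(x)$.

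The delicate point, and the step I expect to be the main obstacle, is the remaining case $\g(x^i)=m-1$, which forces $x^i=(1,\ldots,1)$, the semigroup $\hat S=\{0,m,\rightarrow\}$. This vector solves none of the listed systems: ${\rm P}^m_m(x)$ demands $\sum_j y_j=\sum_j x_j-m$ while $(1,\ldots,1)$ gives $\sum_j y^i_j=\sum_j x_j-(m-1)$, and each ${\rm P}^m_k(x)$ demands genus $\lceil(\F+1)/2\rceil\neq m-1$ in general. Hence I would have to exclude this case rather than accommodate it, and this is where the standing assumption $S\neq\hat S$ enters together with Remark~\ref{rem:8}: since $\hat S$ is the maximal numerical semigroup of multiplicity $m$, it is redundant in any intersection and appears only in its own trivial decomposition, so no $x^i$ equals $(1,\ldots,1)$. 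With this case ruled out, every $y^i$ lies in the union of the solution sets of ${\rm P}^m_1(x),\ldots,{\rm P}^m_{m-1}(x),{\rm P}^m_m(x)$ and each $x^i=x-y^i$ has the asserted form, completing the argument.
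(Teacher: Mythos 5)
Your proof is correct and takes essentially the same route the paper does implicitly: the paper offers no separate proof of Proposition \ref{prop:17}, stating it as ``a consequence of Corollary \ref{cor:5} and the comments above,'' which is precisely the Section \ref{sec:3} derivation you spell out (each term of a decomposition is an oversemigroup, hence an undercoordinate by Proposition \ref{prop:16}; Theorem \ref{theo:15} gives the common difference inequalities; the genus trichotomy of Corollary \ref{cor:5} then routes $y^i$ into ${\rm P}^m_{k^*}(x)$ or ${\rm P}^m_m(x)$). Your explicit treatment of the genus-$(m-1)$ corner case, excluded via Remark \ref{rem:8} and the standing assumption $x\neq(1,\ldots,1)$, is exactly how the paper disposes of it, only written out more carefully than the paper bothers to.
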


\begin{rem}
\label{rem:18}
 Note that the whole set of solutions of ${\rm P}^m_1(x), \ldots, {\rm P}^m_{m-1}(x)$ and ${\rm P}^m_m(x)$ gives a decomposition into $m$-irreducible numerical semigroups of the semigroup $S$ identified with $x$. It is the maximal decomposition since it has the maximum possible number of $m$-irreducible Kunz-coordinates involved, all the $m$-irreducible undercoordinates of $x$.
\end{rem}

In the following we give a methodology to compute minimal decompositions. The main idea is to choose, adequately, solutions of the systems ${\rm P}^m_1(x), \ldots, {\rm P}^m_{m-1}(x)$ and ${\rm P}^m_m(x)$.

The first step to select decompositions  that are minimal with respect to the inclusion ordering is to find the minimal elements in the set of $m$-irreducible undercoordinates of a Kunz-coordinates vector $x$. This fact can be formulated as a multiobjective integer programming problem as stated in the following result.

\begin{theo}
\label{theo:19}
Let $x\in \Z^{m-1}_+$ be a Kunz-coordinates vector. The Kunz-coordinates vectors of the elements in $\mathcal{I}_m(x)$ are in the form $x-\hat{y}$ where $\hat{y}$ is a nondominated solution of any of the following multiobjective integer linear programming problems.
\begin{equation}
\label{mop}\tag{${\rm MIP}^m_k(x)$}
v-\min \;(y_1, \ldots, y_{m-1}) \text{ s.t. } y \in {\rm P}_k^m(x), \quad \text{ for } k=1, \ldots, m-1, m.
\end{equation}

\end{theo}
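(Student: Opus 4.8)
The plan is to reduce the statement to an order-reversing change of variables combined with the characterization of $m$-irreducible undercoordinates already obtained via the polytopes ${\rm P}_k^m(x)$. First I would record the elementary observation driving everything: writing $x' = x - y$, the assignment $x' \mapsto y$ reverses the componentwise order, since $x' \geq x''$ is equivalent to $x - x' \leq x - x''$. Hence a feasible $x'$ is \emph{maximal} with respect to $\leq$ inside a prescribed family of undercoordinates precisely when the corresponding $y = x - x'$ is \emph{nondominated} for the vector minimization $v$-$\min\,(y_1, \ldots, y_{m-1})$ over the image of that family. This is the dictionary that converts ``maximal $m$-irreducible undercoordinate'' (i.e.\ membership in $\mathcal{I}_m(x)$) into ``nondominated solution of a $v$-$\min$''.

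Next I would pin down the feasible set. Invoking Proposition \ref{prop:17} together with the explicit descriptions in the previous section, one has that $y$ solves one of ${\rm P}_1^m(x), \ldots, {\rm P}_{m-1}^m(x), {\rm P}_m^m(x)$ if and only if $x - y$ is an $m$-irreducible undercoordinate of $x$: for $k \leq m-1$ the system ${\rm P}_k^m(x)$ collects the irreducible undercoordinates whose Frobenius number is attained at the $k$-th component, ${\rm P}_m^m(x)$ collects those of genus $m$, and the genus $m-1$ case is discarded by the standing assumption $x \neq (1, \ldots, 1)$. Thus $\bigcup_k \{x - y : y \in {\rm P}_k^m(x)\}$ is exactly the set of $m$-irreducible undercoordinates of $x$.

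With the dictionary and the feasibility description in hand, the theorem would follow from a one-line domination argument. Given $x' \in \mathcal{I}_m(x)$, I set $\hat{y} = x - x'$; since $x'$ is $m$-irreducible, $\hat{y}$ solves ${\rm P}_k^m(x)$ for some $k$, which I fix. If $\hat{y}$ failed to be nondominated for $({\rm MIP}^m_k(x))$, there would be a feasible $y'' \in {\rm P}_k^m(x)$ with $y'' \leq \hat{y}$, $y'' \neq \hat{y}$; then $x'' = x - y''$ would be an $m$-irreducible undercoordinate with $x'' \geq x'$, $x'' \neq x'$, contradicting the defining maximality of $x' \in \mathcal{I}_m(x)$. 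Hence $\hat{y}$ is nondominated and $x' = x - \hat{y}$ has the claimed form.

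The step I expect to be the real content --- and where the earlier sections carry the load --- is the feasibility equivalence, specifically that a solution of the \emph{linearized} system ${\rm P}_k^m(x)$ with $k \leq m-1$ is genuinely irreducible with Frobenius number at component $k$. I would justify this by the general lower bound $\g \geq \lceil (\F+1)/2 \rceil$: once the genus equation built into ${\rm P}_k^m(x)$ holds, this bound forces the maximum defining the Frobenius number to be attained at $k$, so that $x - y$ is indeed irreducible. Granting this equivalence, no further optimization theory is needed --- only the definition of a nondominated point --- and the order-reversal plus the domination argument close the proof.
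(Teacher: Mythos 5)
Your proof is correct and follows essentially the same route as the paper's: write an element of $\mathcal{I}_m(x)$ as $x-\hat{y}$, observe that $m$-irreducibility places $\hat{y}$ in one of the systems ${\rm P}_k^m(x)$ (the irreducible case when $\F(x-\hat{y})>2m$, the genus-$m$ case otherwise), and derive a contradiction with the maximality of $x-\hat{y}$ if $\hat{y}$ were dominated by another feasible point, using that $x'\mapsto x-x'$ reverses the componentwise order. The only addition is your justification, via the general bound $\g \geq \left\lceil (\F+1)/2 \right\rceil$, that solutions of the linearized system ${\rm P}_k^m(x)$ really attain their Frobenius number at component $k$ --- a detail the paper's proof takes for granted.
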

\begin{proof}
Let $x'$ be an element in $\mathcal{I}_m(x)$. Then, $x'=x-y'$ for some $y \in \Z^{m-1}_+$. If $k=\F(x') \pmod m$, then, $\F(x') = mx_k'+k-m$.
Since $x'$ is an $m$-irreducible undercoordinate of $x$ with the above Frobenius number, either $y' \in {\rm P}_k^m(x)$ (if $\F(x')>2m$) or $y' \in {\rm P}^m_m(x)$ (if $\F(x') < 2m$). Suppose that there is a nondominated solution, $\hat{y}$, of ${\rm MIP}^m_k(x)$  (resp. ${\rm MIP}^m_m(x)$) dominating $y'$. Then, we can find $\hat{x} = x - \hat{y}$, with $\hat{y}$ nondominated solution of ${\rm MIP}^m_k(x)$  (resp. ${\rm MIP}^m_m(x)$) such that $\hat{y} \leq y'$ and $\hat{y} \neq y'$. Then, $\hat{x} \ge x'$ and $x'\neq \hat{x}$, and consequently, we have found an $m$-irreducible maximal Kunz-coordinates in  $\mathcal{I}_m(x)$ such that $\hat{x} \ge x'$ and $x'\neq \hat{x}$, contradicting the maximality of $x'$.
\end{proof}

Note that, $\Gamma$, the union of the nondominated solutions of ${\rm MIP}^m_1(x)$, \ldots, ${\rm MIP}^m_m(x)$ contains $\mathcal{I}_m(x)$,
but it may contain nondominated solutions of ${\rm MIP}^m_k(x)$ that dominate some nondominated solution of
 ${\rm MIP}^m_j(x)$, if $k\neq j$. Thus, $\Gamma$ may contain coordinates vectors that dominate one another what would lead to non minimal decompositions into $m$-irreducible Kunz-coordinates vectors.

The key to get minimal decompositions into $m$-irreducible Kunz-coordinates follows by applying Lemma \ref{lemma:12}. Therefore, we need to address the question about how to compute $\SG_m(x)$. Algorithm \ref{alg:sg} shows the way of computing the special gaps greater than the multiplicity of a Kunz- coordinates vector. This algorithm is based in the following result. There,  ${\km} (n)=n \mod m$ stands for the nonnegative integer remainder of dividing $n$ by $m$, i.e., ${\km} (n) = n \mod m$.

\begin{theo}
Let $x\in \Z^{m-1}_+$ be a Kunz-coordinates vector and $m<h \in \N$. Then, $h \in \SG_m(x)$ if and only if $h=(x_{{\km} (h)}-1)+{\km} (h)$ and such that $x_{{\km} (h)}+x_j>x_{{\km} ({\km} (h)+j)}-\gamma_{{\km} (h), j}$ for all $j=1, \ldots, m$ with ${\km} (h)+j \neq m$ and $2h \geq mx_{{\km} (2h)} + {\km} (2h)$; and where $\gamma_{ij}=\left\{\begin{array}{rl} 1 & \mbox{if $i+j>m$}\\
0 & \mbox{ otherwise}\end{array}\right.$ for all $i, j =1, \ldots, m-1$.
\end{theo}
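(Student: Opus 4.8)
The plan is to unfold the definition of a special gap and translate each of its two defining conditions into the language of Kunz-coordinates. Recall that $h \in \SG_m(x)$ means three things simultaneously: $h>m$, $h$ is a gap of the semigroup $S$ associated with $x$ (i.e.\ $h \in \G(S)$), and $S \cup \{h\}$ is again a numerical semigroup. I would handle the gap condition and the ``$S\cup\{h\}$ is a semigroup'' condition separately, and then assemble them.

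\medskip

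\emph{First, the gap condition.} Using the reconstruction $\Ap(x) = \{0\}\cup\{mx_i+i : i=1,\ldots,m-1\}$, the characterization of membership in $S$ says $n \in S \iff mx_{{\km}(n)}+{\km}(n) \le n$. Hence $h$ is a gap precisely when $mx_{{\km}(h)}+{\km}(h) > h$. The claim, however, asserts the stronger and more precise statement $h = m(x_{{\km}(h)}-1)+{\km}(h)$. I would argue that, for $S\cup\{h\}$ to be a semigroup, $h$ must be the \emph{largest} gap in its congruence class modulo $m$; for if some larger $h' \equiv h \pmod m$ were also a gap, then $h$ would not be removable while keeping the set closed (adding $h$ without $h'$ would create a sum $h + km \notin S\cup\{h\}$ with $h+km \le w_{{\km}(h)}-m$). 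The largest gap in the class ${\km}(h)$ is exactly $w_{{\km}(h)} - m = m x_{{\km}(h)}+{\km}(h) - m = m(x_{{\km}(h)}-1)+{\km}(h)$, which gives the displayed equality. (Here I am reading the statement's ``$(x_{{\km}(h)}-1)+{\km}(h)$'' as the value $m(x_{{\km}(h)}-1)+{\km}(h)$, i.e.\ with the implicit multiplier $m$.)

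\medskip

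\emph{Second, the closure condition $S\cup\{h\}$ is a semigroup.} The set $S\cup\{h\}$ is closed under addition iff $h+s \in S\cup\{h\}$ for every $s\in S$; since $s\ge h$ gives $h+s$ large enough to lie in $S$ automatically, the only constraints come from the Ap\'ery generators. Writing $w_j = mx_j+j$ for the Ap\'ery elements, closure under $h + w_j$ for each $j$ translates into $mx_{{\km}(h)}+{\km}(h) + mx_j + j \,\in\, S\cup\{h\}$, i.e.\ into the requirement that $x_{{\km}(h)}+x_j \ge x_{{\km}({\km}(h)+j)} - \gamma_{{\km}(h),j}$, where the correction term $\gamma_{{\km}(h),j}$ accounts for whether the index sum wraps past $m$ (exactly the two regimes appearing in Theorem~\ref{theo:15}). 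The inequality in the statement is the strict version $x_{{\km}(h)}+x_j > x_{{\km}({\km}(h)+j)} - \gamma_{{\km}(h),j}$; I would trace through where the strictness comes from, namely from the fact that $h$ itself is \emph{not} in $S$ while $2h \notin S$ must be excluded by the final condition $2h \ge mx_{{\km}(2h)}+{\km}(2h)$, which says precisely $2h\in S$. The case ${\km}(h)+j = m$ is excluded because then $w_{{\km}(h)}+w_j$ is a multiple of $m$ plus $h+\,(\text{something in }S)$ and is handled by the multiplicity directly, so it imposes no new constraint.

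\medskip

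\emph{Assembly and the main obstacle.} Having shown each clause is equivalent to one of the three stated conditions, the two directions follow by conjunction. I expect the delicate point to be the second condition: pinning down exactly which index pairs $(\,{\km}(h),j\,)$ generate genuine closure constraints and why strict inequality (rather than $\ge$) is the correct form, together with correctly treating the wrap-around via $\gamma$ and the special role of the condition $2h\ge mx_{{\km}(2h)}+{\km}(2h)$ (which encodes $2h\in S$, the self-sum that is otherwise not covered by the Ap\'ery-generator sums). Carefully separating ``$h$ is a gap'' from ``$h$ is the maximal gap in its class'' — the latter being forced by the semigroup-closure requirement rather than by the gap condition alone — is the subtle structural step on which the whole equivalence rests.
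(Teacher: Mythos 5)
Your proposal is correct, and it takes a genuinely different route from the paper's. The paper never works from the definition of a special gap: it quotes from \cite{blanco-rosales10} a three-condition characterization of $\SG_m(x)$ --- $h=w_i-m$ for some $w_i\in\Ap(x)$, a maximality-type condition on $w_i$ inside the Ap\'ery set, and $2h\ge w_{{\km} (2h)}$ --- and its proof consists solely of translating those conditions into Kunz coordinates, the only real observation being that Theorem~\ref{theo:15} already supplies the weak inequalities, so ``distinct'' upgrades to ``strictly greater''. You instead derive the three conditions from scratch: closure of $S\cup\{h\}$ under adding $m$ forces $h$ to be the largest gap in its residue class, i.e.\ $h=m(x_{{\km} (h)}-1)+{\km} (h)$; closure under adding each nonzero Ap\'ery element $w_j$ means $w_{{\km} (h)}+w_j-m\in S$, which (both sides being congruent modulo $m$, with the weak Kunz inequality automatic) is exactly the strict inequality $x_{{\km} (h)}+x_j>x_{{\km} ({\km} (h)+j)}-\gamma_{{\km} (h),j}$; the self-sum gives $2h\ge mx_{{\km} (2h)}+{\km} (2h)$; and for ${\km} (h)+j=m$ the sum $h+w_j$ is a positive multiple of $m$, hence imposes no constraint --- exactly the exclusion in the statement. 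Beyond being self-contained, your route is actually safer here: the condition the paper quotes is written with the difference reversed ($w_i-w_j\notin\Ap(x)$, an indecomposability statement, rather than $w_j-w_i\notin\Ap(x)$, the maximality statement that characterizes pseudo-Frobenius numbers; the two are not equivalent conditions), and the paper's subsequent ``by changing indices'' step silently compensates for that reversal, whereas your first-principles derivation lands directly on the correct summand form $w_{{\km} (h)}+w_j\notin\Ap(x)$. What the paper's route buys is brevity, by delegating the structural work to the cited reference. Two details you should firm up in a full write-up: justify the reduction to Ap\'ery generators by writing each $s\in S\setminus\{0\}$ as $w_j+tm$ with $t\ge 0$ and using that membership in $S$ is stable under adding $m$ (your ``$s\ge h$ makes $h+s$ lie in $S$ automatically'' is not the right reason); and replace ``I would trace through where the strictness comes from'' by the one-line argument $h+w_j\in S\iff w_{{\km} (h)}+w_j\ge w_{{\km} ({\km} (h)+j)}+m\iff$ the corresponding Kunz inequality holds strictly.
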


\begin{proof}
The elements in $\SG_m(x)$ are those elements fulfilling the following conditions (see \cite{blanco-rosales10}):
\begin{itemize}
\item $h=w_i - m$, where $w_i \in \Ap(x)$, for some $i=1, \ldots, m-1$.
\item $w_i-w_j \not\in \Ap(x)$ for all $w_j \in \Ap(x)$, $w_j\neq w_i$.
\item $2h \geq w_{{\km} (2h)}$
\end{itemize}
By the identification of Kunz-coordinates vectors and the elements in the Ap\'ery set, the first conditions
are translated in $h=mx_i+i - m=m(x_i-1)+i$. The second set of conditions consist of checking for each $j\neq i$
 if $w_i-w_j = mx_i+i-mx_j-j \not\in \{0\} \cup \{mx_k+k: k=1, \ldots m-1\}$. Note that if $mx_i+i-mx_j-j = mx_k+k$
 for some $k$, then, ${\km} (k)={\km} (i-j)$, so, if $w_i-w_j$ is an element in $\Ap(x)$ the unique possible choice is $w_{{\km} (i-j)}$.
 Now, if $i>j$, then ${\km} (i-j)=i-j$, and the condition is the same as checking if $mx_i + i - mx_j - j \neq mx_{i-j} + i-j$,
equivalently, if $x_i + x_{i-j} \neq  x_j$. Since $x$ is a Kunz-coordinates vector, by Theorem \ref{theo:15}, $x_i + x_{i-j} \geq x_{j}$,
so checking that those elements are different is the same that $x_i + x_{i-j} > x_{j}$. Clearly, by changing indices, it is
that $x_i + x_j > x_{i+j} + \gamma_{ij}$ (in this case $i+j>m$). The case when $i<j$ is analogous but taking into account that in that case ${\km} (i-j) = i-j + m$.

The third conditions is direct from the algorithm given in \cite{blanco-rosales10} to compute $\SG_m(x)$.
\end{proof}

The above theorem is used to compute the set $\SG_m(x)$ for any Kunz-coordinates vector $x\in \Z^{m-1}_+$ as shown in Algorithm \ref{alg:sg}.

\begin{algorithm2e}[h]
\label{alg:sg}

\SetKwInOut{Input}{Input}
\SetKwInOut{Output}{Output}
\Input{A Kunz-coordinates vector $x \in \Z^{m-1}_+$.}

Compute $M_1=\{m(x_i-1)+i: x_i+x_j>x_{i+j}, \text{ for all } j \text{ with } i+ j<m\}$ and $M_2=\{m(x_i-1)+i: x_i+x_j>x_{i+j-m}-1, \text{ for all } j \text{ with } i+ j>m\}$.

$ $

\Output{$\SG_m(x)=\{z \in M_1 \cap M_2: z>m \text{ and } 2z \ge m\,x_{{\km} (2z) } + {\km} (2z) \}$.}
\caption{Computing the special gaps greater than the multiplicity of a Kunz-coordinates vector.}
\end{algorithm2e}
Note that the complexity of Algorithm \ref{alg:sg} is $O(m^2)$.

From Algorithm \ref{alg:sg} and the Kunz-coordinates vector of a numerical semigroup, we obtain the following useful result.

\begin{prop}
\label{prop:20}
Let $x \in \Z^{m-1}_+$ be a Kunz-coordinates vector, $y \in \Z^{m-1}_+$ and $h \in \SG_m(x)$. If $x-y$ is a
 undercoordinate of $x$, then, $h \in \G(x-y)$ if and only if $y_{{\km} (h)} =0$.
Furthermore, $\F(x-y)$ is the unique element in $\{h \in \SG_m (x): {{\km}} (h)  = \max\{i \in \{1, \ldots, m-1\}: y_i=0\}\}$.
\end{prop}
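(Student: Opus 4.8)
The plan is to prove the two assertions separately, deriving both from the description of the gaps of an undercoordinate in terms of the Apéry values $w_i=mx_i+i$, together with the characterisation of special gaps recalled just before Algorithm~\ref{alg:sg}. Throughout I write $x'=x-y$ for the undercoordinate and $w'_i=m(x_i-y_i)+i=w_i-my_i$ for the $i$-th Apéry value of $x'$, and I use the definition of $\G$ in the form: $n\in\G(x')$ exactly when $w'_{{\km}(n)}>n$.

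For the first assertion the argument is a one-line computation. Every $h\in\SG_m(x)$ has the form $h=w_i-m$ for some Apéry element $w_i$, and since $h\equiv i\pmod m$ this forces $i={\km}(h)$, so $h=w_{{\km}(h)}-m$. Hence $h\in\G(x')$ iff $w'_{{\km}(h)}>h$, i.e. iff $w_{{\km}(h)}-m\,y_{{\km}(h)}>w_{{\km}(h)}-m$, which simplifies to $m\,y_{{\km}(h)}<m$, that is $y_{{\km}(h)}<1$. As $y\in\Z_+$, this is equivalent to $y_{{\km}(h)}=0$, giving the stated equivalence.

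For the second assertion I would first record a uniqueness-per-residue fact: for each $r\in\{1,\dots,m-1\}$ there is at most one special gap of $x$ in that class, because any $h\in\SG_m(x)$ with ${\km}(h)=r$ must equal $w_r-m=m(x_r-1)+r$, a value determined by $x$ alone. Thus $\{h\in\SG_m(x):{\km}(h)=i_0\}$ has at most one element for each $i_0$, and it remains to show that $\F(x')$ lies in $\SG_m(x)$ and that its residue ${\km}(\F(x'))$ is the prescribed index. Writing $\F(x')=\max_i\{w'_i\}-m$ and letting $i_0$ be the (unique, since the $w'_i$ are pairwise incongruent mod $m$) index attaining the maximum, we get ${\km}(\F(x'))=i_0$ and $\F(x')=w'_{i_0}-m$. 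Granting $\F(x')\in\SG_m(x)$, the first assertion applied to the gap $h=\F(x')$ yields $y_{i_0}=0$, whence $\F(x')=w_{i_0}-m=m(x_{i_0}-1)+i_0$; combined with the uniqueness-per-residue fact this pins $\F(x')$ down as the unique special gap of $x$ in the class $i_0$.

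The hard part will be exactly the two facts just assumed: that $\F(x')$ is a special gap of $x$ at all, and that its residue $i_0$ is the announced index. For an arbitrary undercoordinate $\F(x')$ need not lie in $\SG_m(x)$ (covering several special gaps simultaneously can push the Frobenius number into a non-special class), so I expect this step to use that the undercoordinates relevant to a decomposition are the minimal $m$-irreducible ones in $\mathcal{I}_m(x)$, for which the largest surviving Apéry value $w'_{i_0}=\max_i\{w_i-my_i\}$ is attained at an index with $y_{i_0}=0$ and equals an original Apéry value $w_{i_0}$; then $\F(x')=w_{i_0}-m$ is forced to satisfy the special-gap conditions since it is simultaneously the largest gap of the $m$-irreducible $x'$ and a gap of $x$. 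The most delicate point is the identification of $i_0$: it is the index $i$ with $y_i=0$ maximising $w_i=mx_i+i$, and one must verify that, for the undercoordinates under consideration, this coincides with $\max\{i\in\{1,\dots,m-1\}:y_i=0\}$ (working throughout under $x'\neq(1,\dots,1)$, so that $\F(x')>m$ and membership in $\SG_m(x)$ is even possible). This last comparison of Apéry values is where I anticipate the real work of the proof to lie.
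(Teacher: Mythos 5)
Your proof of the first equivalence is correct and is essentially the paper's own argument: since $h\in\SG_m(x)$ forces $h=m(x_{{\km} (h)}-1)+{\km} (h)$, membership of $h$ in $\G(x-y)$ amounts to $m(x_{{\km} (h)}-y_{{\km} (h)})+{\km} (h)\ge h+1$, i.e.\ $y_{{\km} (h)}<1$, and integrality finishes; the paper runs exactly this computation in both directions. You should also know that the paper's proof of Proposition \ref{prop:20} stops there: it never addresses the ``Furthermore'' clause at all, so there is no hidden argument that you failed to reproduce. Your uniqueness-per-residue observation (each residue class contains at most one special gap, namely $m(x_r-1)+r$ if that number is special) is correct and is the only part of the second clause that can be salvaged in full generality.

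Your suspicion about the rest is also correct, and can be made concrete: the ``Furthermore'' clause is false for arbitrary undercoordinates, so the ``real work'' you anticipated cannot be carried out from the stated hypotheses. Take the data of Example \ref{ex:26}: $m=5$, $x=(2,2,3,4)$, $\SG_5(x)=\{6,13,19\}$ with residues $1,3,4$. For $y=(0,0,0,1)$ the vector $x-y=(2,2,3,3)$ satisfies all inequalities of Theorem \ref{theo:15} and is componentwise below $x$ (it is the oversemigroup $S\cup\{19\}$), the largest index with $y_i=0$ is $3$, so the clause predicts $\F(x-y)=13$; but $\F(x-y)=\max\{11,12,18,19\}-5=14\notin\SG_5(x)$. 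Restricting to $m$-irreducible undercoordinates does not repair it: $y=(1,0,2,3)$ gives $x-y=(1,2,1,1)$, which has genus $5=m$ and is therefore $5$-irreducible, yet its largest zero index is $2$ and no special gap of $x$ has residue $2$, so the set in the statement is empty. What is true --- and what the paper actually relies on later, in Lemma \ref{lemma:20} and Theorem \ref{theo:22} --- is the statement for solutions $y$ of the problems in which the Frobenius residue class has been fixed: if $y$ is feasible for ${\rm P}^m_{{\km} (h)}(x)$ with $y_{{\km} (h)}=0$, then the constraint defining that polytope pins the Frobenius number to $m x_{{\km} (h)}+{\km} (h)-m=h$. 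So the correct resolution is not a cleverer comparison of Ap\'ery values, but adding these hypotheses (equivalently, reading the clause as part of Lemma \ref{lemma:20}); as a freestanding claim about every $y$ with $x-y\in\mathcal{U}_m(x)$ it is simply wrong, and the paper gives no proof of it.
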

\begin{proof}
Since $h\in \SG_m(x)$, by Algorithm \ref{alg:sg}, $h=m(x_{{\km} (h) } - 1) + {\km} (h) $.

If $h \in \G(x-y)$ then, $m(x_{{\km} (h) }-y_{{\km} (h) }) + {\km} (h)  \ge h+1 = m(x_{{\km} (h)} - 1) + {\km} (h) + 1$, that is, $y_{{\km} (h)} \leq \dfrac{m-1}{m} < 1$, and then $y_{{\km} (h) }=0$ because $y_i\ge 0$ for all $i=1, \ldots, m-1$.

Conversely, if $y_{{\km} (h) }=0$, then,  $m(x_{{\km} (h) }-y_{{\km} (h) }) + {\km} (h)  = mx_{{\km} (h) } + {\km} (h)  \geq h+1$ since $h$ is an special gap of $x$, and then, in particular, a gap of $x$. Thus, $h \in \G(x-y)$.
\end{proof}

By Proposition \ref{prop:11}, for each $h \in {\rm SG}_m(x)$ we are looking among our solution, $y$, for one that holds $h \in \G(x-y)$. This is equivalent, by Proposition \ref{prop:20}, to search for those with $y_{{\km} (h) }=0$. Then, from all the minimal $m$-irreducible numerical oversemigroups of $S$, we only need for the minimal decomposition, those that do not contain the special gaps of $S$. The following result even shrink further this search.

\begin{lem}
\label{lemma:20}
Let $x\in   \Z^{m-1}_+$ be a Kunz-coordinates vector and $h \in \SG_m(x)$. Then, every nondominated solution of \eqref{mop}, $y$, has $y_{{\km} (h)}=0$, and then, $\F(x-y)=h$. Moreover, $y$ is the solution with the minimum sum of its coordinates (length).
\end{lem}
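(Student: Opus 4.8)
The plan is to fix $k := {\km}(h)$ and read everything off the single component $y_k$. First I would record that, since $h\in\SG_m(x)$, Algorithm~\ref{alg:sg} gives $h=m(x_k-1)+k$, hence $m(x_k-y_k)+k-m=h-m\,y_k$ for every $y$. Substituting this into the genus equation of \eqref{polytope:x2} turns it into
\[
\sum_{i=1}^{m-1} y_i \;=\; \sum_{i=1}^{m-1} x_i-\left\lceil\frac{h-m\,y_k+1}{2}\right\rceil ,
\]
so that on ${\rm P}_k^m(x)$ the total length $\sum_i y_i$ is a strictly increasing function of $y_k$ alone. This monotone relation is the engine of the proof: minimizing the length is the same as driving $y_k$ to $0$, and it already shows that as soon as $y_k=0$ is forced, the length equals $\sum_i x_i-\lceil(h+1)/2\rceil$, which is the claimed minimum.

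The key step is to show that a nondominated solution $y$ of \eqref{mop} (for this index $k={\km}(h)$) must have $y_k=0$. I would argue by contradiction: assume $y_k\ge 1$. Lowering $y_k$ by one unit raises $h-m\,y_k$ by $m$, so by the displayed identity it lowers the forced length by $\delta=\lceil(h-m y_k+m+1)/2\rceil-\lceil(h-m y_k+1)/2\rceil\ge \lfloor m/2\rfloor\ge 1$; exactly one unit of this drop is accounted for by the coordinate $y_k$ itself, and the remaining $\delta-1\ge 0$ units can be removed from coordinates that are currently positive. The goal is thus to produce a feasible point $y'\in {\rm P}_k^m(x)$ with $y'\le y$ and $y'\ne y$, which would contradict the nondominance of $y$; iterating the step would then force $y_k=0$.

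Once $y_k=0$ is established I would conclude as follows. By Proposition~\ref{prop:20}, $y_k=0$ means $h\in\G(x-y)$, so $\F(x-y)\ge h$; on the other hand the genus equation now reads $\g(x-y)=\sum_i(x_i-y_i)=\lceil(h+1)/2\rceil$. Using that every numerical semigroup satisfies $\g\ge\lceil(\F+1)/2\rceil$, with equality exactly in the irreducible case (see the discussion preceding Corollary~\ref{cor:5}), we get $\lceil(h+1)/2\rceil=\g(x-y)\ge\lceil(\F(x-y)+1)/2\rceil\ge\lceil(h+1)/2\rceil$, whence $\F(x-y)=h$ and $x-y$ is irreducible; together with the displayed identity this pins the length at $\sum_i x_i-\lceil(h+1)/2\rceil$, the minimum. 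The hard part is the feasibility of the reduction in the key step: when $y_k$ is decreased, the Kunz difference inequalities of \eqref{polytope:x2} in which $y_k$ occurs as the subtracted term (those indexed by $i+j\equiv k \pmod m$) get tighter, so the freed length must be redistributed so as to lower the corresponding $y_i,y_j$ without disturbing the remaining inequalities. I expect to control this redistribution by leaning on Proposition~\ref{prop:20} and on the length budget released by the monotone relation above, rather than on an explicit coordinate-by-coordinate construction.
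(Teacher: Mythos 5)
Your outline gets the easy parts right: the identity $h=m(x_{{\km}(h)}-1)+{\km}(h)$, the substitution of it into the genus equation of \eqref{polytope:x2}, and the resulting formula $\sum_i y_i=\sum_i x_i-\left\lceil (h+1)/2\right\rceil$ once $y_{{\km}(h)}=0$ are exactly the computations the paper uses for the ``minimum length'' claim. The problem is your key step. You reduce the whole lemma to producing, from a feasible $y$ with $y_{{\km}(h)}\ge 1$, a feasible $y'\in {\rm P}_{{\km}(h)}^m(x)$ with $y'\lneq y$, and you explicitly defer that construction (``I expect to control this redistribution by leaning on Proposition~\ref{prop:20} and on the length budget\dots''). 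That deferred step is the entire content of the lemma. Decreasing $y_{{\km}(h)}$ by one and stripping the remaining $\delta-1$ units off other positive coordinates must leave a point that still satisfies \emph{all} the Kunz inequalities --- i.e.\ $x-y'$ must again be the Kunz-coordinates vector of a numerical semigroup, with Frobenius number attained at component ${\km}(h)$ and genus exactly $\left\lceil(\F(x-y')+1)/2\right\rceil$ --- and neither Proposition~\ref{prop:20} (which only characterizes when a special gap of $x$ is a gap of $x-y$) nor any bookkeeping of the length budget gives you this: whether a unit can be removed from a given positive coordinate without leaving the Kunz polytope is precisely the question of whether a suitable smaller irreducible oversemigroup of $S$ exists, which is what had to be proved in the first place.

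The paper does not perturb at all; it imports the needed existence statement from semigroup theory. Since $h\in\SG_m(x)$, Propositions~\ref{prop:2} and~\ref{prop:11} guarantee that some element $x-y$ of a decomposition of $x$ has $h$ among its gaps; the gap condition then forces $m(x_{{\km}(h)}-y_{{\km}(h)})+{\km}(h)\ge h+1$, i.e.\ $y_{{\km}(h)}\le 1-\frac{1}{m}<1$, so a feasible point with $y_{{\km}(h)}=0$ exists. The genus equation then shows that every feasible $y'$ has $\sum_i y'_i\ge \sum_i x_i-\left\lceil (h+1)/2\right\rceil$, with equality precisely when $y'_{{\km}(h)}=0$; so the points with $y_{{\km}(h)}=0$ are exactly the minimum-length feasible points, they are therefore nondominated, any nondominated point dominating the exhibited one inherits $y_{{\km}(h)}=0$, and $\F(x-y)=h$ follows from membership in ${\rm H}_{{\km}(h)}^m(x)$. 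Note that even the paper never attempts a perturbation argument: nondominance is reached only through this minimum-length characterization plus the semigroup-theoretic existence input. Your proposal lacks exactly that input, and the local redistribution you hope will replace it cannot be carried out by polyhedral reasoning alone; as it stands, the argument has a genuine gap.
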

\begin{proof}
By Algorithm \ref{alg:sg}, $h= mx_{{\km} (h) } +{{\km} (h)}-m$. Furthermore, $h \in \G(x')$ for some $x'=x-y$ in the decomposition, so $m(x_{{\km} (h) }-y_{{\km} (h) })+{{\km} (h) } \ge h+1 = mx_{{\km} (h) } + {\km} (h)  -m + 1$. Then, $y_{{\km} (h) } \leq 1 - \frac{1}{m} < 1$, being then $y_{{\km} (h) }=0$.

Then, we have a feasible solution of \ref{mop} with $y_{{\km} (h)}=0$. Therefore, for each nondominated solution, $\hat{y}$, dominating $y$, i.e., $\hat{y} \leq y$ and $y\neq \hat{y}$. The former implies that $\hat{y}_{{\km} (h) }=0$.

Furthermore, $\F(x-\hat{y}) = m(x_k-y_k) + k - m = mx_k + k -m = h$.

Since any feasible solution, $y'$, of \eqref{mop} must hold $\dsum_{i=1}^{m-1} y'_i = \dsum_{i=1}^{m-1} x_i - \left\lceil \dfrac{m(x_k-y'_k) + k - m +1}{2} \right\rceil$, then, $\dsum_{i=1}^{m-1} y_i' \geqslant \dsum_{i=1}^{m-1} x_i - \left\lceil \dfrac{mx_k + k - m +1}{2} \right\rceil = \dsum_{i=1}^{m-1} y_i$, so $y$ has minimum length, and no one else has this length.
\end{proof}

By the above result we know that, if we fix a special gap, $h$,
 a nondominated solution of \ref{mop} with minimum length can be computed by fixing the value of $y_{{\km} (h)}$.
Then, moving through all the special gaps in $\SG_m(x)$ and fixing each one of them in \ref{mop}, we can obtain at least $\#SG_m(x)$ nondominated solutions giving a decomposition of $x$ into $m$-irreducible Kunz-coordinates.

Therefore, an upper bound on the number of elements in any decomposition is the number of special gaps greater than the multiplicity of the semigroup. Thus, for each problem ${\rm P}_k^m(x)$ we can add the constraint requiring that $h$ is a gap of the Kunz-coordinates vector, for each $h\in \SG_m(x)$, i.e., $y_{{\km} (h)}=0$. Then, for each $h \in \SG_m(x)$ and $k \in \{1, \ldots, m\}$ we need to solve the following multiobjective problem:

\begin{equation}
\label{mopoly:x}
\begin{array}{lll}
v-\min &(y_1, \ldots, y_{m-1}) &\\
s.t.&&\\
&y_{{\km} (h)}=0\\
&y \in {\rm P}_k^m(x)\\
\end{array}\tag{${\rm MIP}^m(x,h)$}
\end{equation}

\begin{rem}
\label{rem:21}
By Lemma \ref{lemma:20}, it is enough to search for those $m$-irreducible Kunz-coordinates with Frobenius numbers in $\SG_m(x)$. If $h \in \SG_m(x)$, this constraint is added as $\max_i \{m(x_i-y_i)+i\}-m = h$, or equivalently as $y_{{\km} (h)}=0$.

Note that any solution of ${\rm MIP}^m(x,h)$ is a numerical semigroup with Frobenius number congruent with $k$ modulo $m$. Since $h \equiv k(h) \pmod m$, $h\in \SG_m(x)$, and the solutions are minimal, if one solution has Frobenius smaller than $h$, then $h$ is not in the set of gaps of those Kunz-coordinates.  Then, this element irrelevant for the decomposition, since there must exist some other semigroup so that $h$ belongs to it.
\end{rem}

Hence, we can simplify further the decomposition process considering only single-objective integer problems rather than multiobjective ones. The following result states this fact.
\begin{theo}
\label{theo:22}
Let $x$ be a Kunz-coordinates vector. Then, the elements in a minimal decomposition of $x$ into $m$-irreducible Kunz-coordinates must belong to the union of the set of optimal solutions of the following problems:
\begin{equation}
\label{ip:x}
\begin{array}{lll}
\min &  \dsum_{i=1}^{m-1} y_i &\\
s.t.&&\\
&y \in {\rm P}_{{\km} (h)}^m(x)\\
&y_{k(h)}=0\\
\end{array}\tag{${\rm IP}^m(x,h)$}
\end{equation}
if $h>2m$ or
\begin{equation}
\label{ipm:x}
\begin{array}{lll}
\min &  \dsum_{i=1}^{m-1} y_i &\\
s.t.&&\\
&y_{k(h)} = x_{k(h)}-2,\\
&y \in {\rm P}^m_m(x)
\end{array}\tag{${\rm IP}_m^m(x,h)$}
\end{equation}
if $h<2m$, for each $h\in \SG_m(x)$.
\end{theo}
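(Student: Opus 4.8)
The plan is to start from a minimal decomposition $x=x^{1}\cap\cdots\cap x^{n}$, with $x^{i}=x-y^{i}$, and to show that each $x^{i}$ is an optimal solution of exactly one of the single-objective programs indexed by a special gap. The preparatory step is to arrange, using Lemma \ref{lemma:12} together with Lemma \ref{lemma:20} and Remark \ref{rem:21}, that every component used has its Frobenius number equal to a special gap of $x$, say $\F(x^{i})=h_{i}\in\SG_m(x)$: each $x^{i}$ must contribute a gap covering some $h\in\SG_m(x)$ (Proposition \ref{prop:11}), and Lemma \ref{lemma:12} lets me replace any component whose Frobenius number is \emph{not} itself a special gap by one of the same cardinality, covering the same special gaps, whose Frobenius number is a special gap. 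Since $h_{i}>m$, Proposition \ref{prop:4}(1) excludes $\g(x^{i})=m-1$, so by Corollary \ref{cor:5} each $x^{i}$ is either irreducible or has genus $m$; and because $2m$ is a multiple of the multiplicity it lies in every numerical semigroup of multiplicity $m$ and is never a gap, so $h_{i}=2m$ cannot occur and the dichotomy $h_{i}>2m$ versus $h_{i}<2m$ is exhaustive. I would then treat the two cases separately.

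If $h_{i}>2m$, then by Proposition \ref{prop:4}(3) the component $x^{i}$ is irreducible. As $h_{i}=\F(x^{i})$ is the largest gap of $x^{i}$ we have $h_{i}\in\G(x^{i})$, so Proposition \ref{prop:20} forces $y^{i}_{{\km}(h_{i})}=0$, and the maximum defining the Frobenius number is attained at coordinate ${\km}(h_{i})$; hence $y^{i}\in{\rm P}^{m}_{{\km}(h_{i})}(x)$, and $y^{i}$ is feasible for \eqref{ip:x}. The point I want to exploit is that ${\rm P}^{m}_{{\km}(h_{i})}(x)$ already carries the irreducible genus relation, so imposing $y_{{\km}(h_{i})}=0$ pins the objective to the constant value $\dsum_{j=1}^{m-1}y^{i}_{j}=\g(x)-\left\lceil\dfrac{h_{i}+1}{2}\right\rceil$ over the entire feasible region. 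Consequently every feasible point minimizes $\dsum_{j}y_{j}$, and in particular $y^{i}$ is an optimal solution of \eqref{ip:x}. This is precisely the observation that legitimizes replacing the multiobjective problem of Theorem \ref{theo:19} by a single-objective integer program without discarding any component of the decomposition.

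If instead $m<h_{i}<2m$, then Proposition \ref{prop:4}(2) forces $\g(x^{i})=m$. By the explicit description of the solution set of ${\rm P}^{m}_{m}(x)$ in Section \ref{sec:3}, such a vector equals $x^{i}=(1,\ldots,1)+\e_{{\km}(h_{i})}$, which is legitimate because $h_{i}\in\SG_m(x)$ with $h_{i}>m$ forces $x_{{\km}(h_{i})}\geq 2$. Thus $y^{i}=x-(1,\ldots,1)-\e_{{\km}(h_{i})}$ satisfies $y^{i}_{{\km}(h_{i})}=x_{{\km}(h_{i})}-2$, and combined with the genus equation $\dsum_{j}y_{j}=\dsum_{j}x_{j}-m$ and the bounds $y_{j}\leq x_{j}-1$ this leaves no freedom: $y^{i}_{j}=x_{j}-1$ for every $j\neq{\km}(h_{i})$. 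Hence $y^{i}$ is the unique point of ${\rm P}^{m}_{m}(x)$ meeting $y_{{\km}(h_{i})}=x_{{\km}(h_{i})}-2$, so it is trivially the optimal solution of \eqref{ipm:x}. Ranging over $i=1,\ldots,n$ then places every component of the reduced minimal decomposition in the announced union.

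The main obstacle is the preparatory reduction. A generic minimal decomposition may contain a maximal $m$-irreducible component $x-y$ whose Frobenius number is a gap of $x$ that is \emph{not} special — this genuinely happens, for instance for genus-$m$ oversemigroups whose single missing element fails the special-gap test — and for such a component neither substitution $y_{{\km}(h)}=0$ nor $y_{{\km}(h)}=x_{{\km}(h)}-2$ is available. The content of Lemma \ref{lemma:12}, sharpened through Lemma \ref{lemma:20} and the injectivity of $h\mapsto{\km}(h)$ on $\SG_m(x)$ (a special gap of residue $k$ must equal $m(x_{k}-1)+k$, so there is at most one per residue), is exactly that such a component can always be traded for one of the same covering power whose Frobenius number is a special gap, so that the minimum cardinality is already attained within the optimal sets of \eqref{ip:x} and \eqref{ipm:x}. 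Once this reduction is in hand, the feasibility checks and the constancy, respectively uniqueness, of the objective on each feasible region are routine from the constraint descriptions of ${\rm P}^{m}_{{\km}(h)}(x)$ and ${\rm P}^{m}_{m}(x)$.
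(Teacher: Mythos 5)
Your argument is correct, but it takes a genuinely different route from the paper's. The paper proves this theorem by leaning on the multiobjective framework of Theorem \ref{theo:19}: it asserts that the problems \eqref{mopoly:x} have a full-dimensional domination cone, so that all nondominated solutions are supported and can be recovered by the scalarization $\min \dsum_i y_i$, whence \eqref{ip:x} and \eqref{ipm:x} capture the nondominated points that matter. You bypass multiobjective theory entirely: after normalizing a minimal decomposition so that every component has Frobenius number in $\SG_m(x)$, you check feasibility of each component for the corresponding single-objective program and then observe that feasibility already implies optimality --- for $h>2m$ because $y_{{\km} (h)}=0$ together with the genus equation of ${\rm P}^m_{{\km} (h)}(x)$ pins the objective to the constant value $\dsum_{i}x_i-\left\lceil (h+1)/2\right\rceil$ (exactly the content the paper records separately as Lemma \ref{lemma:24}), and for $m<h<2m$ because the feasible set of \eqref{ipm:x} is the single point $x-\mathbf{1}-\e_{{\km} (h)}$ (the paper's observation preceding Theorem \ref{theo:23}). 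What your route buys is robustness and self-containedness: the paper's supportedness claim is delicate for integer multiobjective programs, while your constancy and uniqueness observations are elementary computations from the constraint systems; your version also makes explicit that the statement is really existential (\emph{some} minimal decomposition lies in the stated union), which is what Algorithm \ref{alg:1} actually needs. What it costs is that the whole weight falls on the normalization step, and there you lean on the intended content of Lemma \ref{lemma:12} --- the replacement $S_i\mapsto S_i\cup\{\F(S_i)\}$ carried out in its proof --- rather than on its literal statement, which, read literally, is satisfied by any minimal decomposition (for $m$-irreducible $S_i\neq\{0,m,\rightarrow\}$ one always has $\F(S_i)\in\SG_m(S_i)$, and $h\in\G(S_i)$ forces $h\le\F(S_i)$). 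Since the paper's own proof hides the same normalization inside Lemma \ref{lemma:20} and Remark \ref{rem:21}, this is a shared reliance rather than a defect of your argument, but it deserves to be stated explicitly.
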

\begin{proof}
Problems \eqref{mopoly:x} for any $k=1, \ldots, m-1$ and (${\rm MIP}^m(x,h)$) are multiobjective programs with a full dimension domination cone (see \cite{steuer}). In that case, all the solutions are supported (can be obtained by solving scalars problems, or equivalently, the solutions are in the facets of the convex hull of the integer feasible region) . In our case, when fixing the special gap $h$, we are only interested in one of those nondominated solutions since all of them has $h$ among its gaps, so they are irrelevant for the minimal decomposition.

Furthermore, since the improvement cone, apart of being complete, is the cone generated by $\e_1, \ldots, \e_{m-1}$, and then, the solutions of $\eqref{ip:x}$ and $\eqref{ipm:x}$ are nondominated. Actually, we are looking for solutions with the minimum difference of gaps with $x$, so minimizing $\dsum_i y_i$, and then, it is enough for our purpose to minimize the length of $y$ as formulated in \eqref{ip:x} and \eqref{ipm:x}.
\end{proof}

Note that if \eqref{ipm:x} is feasible, it has a unique feasible solution $y=x-\mathbf{1}-\e_{{\km} (h)}$. Furthermore, this problem is feasible if and only if ${\km} (h) = h-m$ since in that case $h = 2m+  {\km} (h)  - m$, the Frobenius number.

Actually, in this case, if \eqref{ip:x} has also a solution, $y$, it must be also the solution of \eqref{ipm:x}. It is stated in the following theorem.

\begin{theo}
\label{theo:23}
Let $x\in \Z_+^{m-1}$ be a Kunz-coordinates vector, $h \in \SG_m(x)$ and $y^1$ and $y^2$ optimal solutions of problems \eqref{ip:x} and \eqref{ipm:x}, respectively. Then, $y^1=y^2$.
\end{theo}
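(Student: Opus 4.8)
The plan is to show that, in the regime forced by the hypotheses, the feasible region of \eqref{ip:x} is a single point which coincides with the unique feasible point of \eqref{ipm:x}; optimality will turn out to play no role. First I would unpack the hypotheses. Since $y^2$ is a solution of \eqref{ipm:x}, that problem is feasible, so by the remark preceding the theorem we are in the case ${\km}(h)=h-m$, i.e.\ $m<h<2m$, and $y^2=x-\mathbf{1}-\e_{{\km}(h)}$ is its \emph{unique} feasible point; the associated Kunz-coordinates vector $x-y^2=\mathbf{1}+\e_{{\km}(h)}$ is precisely the $m$-irreducible vector of genus $m$ and Frobenius number $h$. It therefore suffices to prove that $x-y^1=\mathbf{1}+\e_{{\km}(h)}$ as well.

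Next I would analyse the numerical semigroup $T$ identified with $x-y^1$. The constraints of ${\rm P}^m_{{\km}(h)}(x)$ guarantee that $T$ is a genuine numerical semigroup with multiplicity $m$ that is an undercoordinate of $x$. The extra constraint $y^1_{{\km}(h)}=0$, together with Proposition \ref{prop:20}, forces $h\in\G(T)$ and makes the ${\km}(h)$-th Ap\'ery value equal to $m x_{{\km}(h)}+{\km}(h)=h+m$ (using $h=m(x_{{\km}(h)}-1)+{\km}(h)$). Substituting $y^1_{{\km}(h)}=0$ and $m x_{{\km}(h)}+{\km}(h)=h+m$ into the two length inequalities of \eqref{polytope:x2} sandwiches $\dsum_i y^1_i$ between $\dsum_i x_i-\frac{h+2}{2}$ and $\dsum_i x_i-\frac{h+1}{2}$, an interval of length $\frac12$ containing the single integer $\dsum_i x_i-\left\lceil\frac{h+1}{2}\right\rceil$; hence $\g(T)=\dsum_i x_i-\dsum_i y^1_i=\left\lceil\frac{h+1}{2}\right\rceil$.

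The crux, and the step I expect to be the main obstacle, is upgrading this to $\g(T)=m$: the polytope \eqref{polytope:x2} was designed to encode the \emph{irreducible} ($\F>2m$) shape, yet here $\F(T)=h<2m$, so I must reconcile the two characterizations. For this I would use that $\F(T)\ge h>m-1$, so $T\neq\{0,m,\rightarrow\}$ and consequently $\{1,\dots,m-1\}\cup\{h\}$ are $m$ distinct gaps of $T$, giving $\g(T)\ge m$. Combining $\g(T)=\left\lceil\frac{h+1}{2}\right\rceil\ge m$ with $h<2m$ leaves only $h\in\{2m-2,2m-1\}$, for both of which $\left\lceil\frac{h+1}{2}\right\rceil=m$; in particular $\g(T)=m$ (and, incidentally, \eqref{ip:x} is infeasible when $h<2m-2$). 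Finally, a multiplicity-$m$ semigroup of genus exactly $m$ has gap set $\{1,\dots,m-1\}$ together with a single further gap, which must be $h$ because $h\in\G(T)$ and $h\notin\{1,\dots,m-1\}$. Thus $\G(T)=\{1,\dots,m-1,h\}$ determines $T$ uniquely as $\{0\}\cup(\{m,m+1,\dots\}\setminus\{h\})$, i.e.\ $x-y^1=\mathbf{1}+\e_{{\km}(h)}=x-y^2$, whence $y^1=y^2$. Note that this argument uses only feasibility of the two problems, so optimality is not needed.
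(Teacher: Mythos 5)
Your proof is correct, and it reaches the paper's conclusion by a more self-contained route. The paper's own argument is three lines long: it observes that $x-y^1$ is an irreducible Kunz-coordinates vector with Frobenius number $h$, that $x-y^2$ has Frobenius number $h$ and genus $m$, and then invokes the characterization (from \cite{rosales03}) of irreducible semigroups as those of maximal genus among semigroups with fixed Frobenius number, together with the fact that this maximal genus is $m$ when $h<2m$; both vectors are then implicitly identified with the unique Kunz-coordinates vector of multiplicity $m$, genus $m$ and Frobenius number $h$, namely $\mathbf{1}+\e_{{\km}(h)}$ (Proposition \ref{prop:4}, case (2)). You arrive at the same terminal point, but you establish the genus fact directly from the constraint system: substituting $y_{{\km}(h)}=0$ and $mx_{{\km}(h)}+{\km}(h)=h+m$ into the two length inequalities of \eqref{polytope:x2} pins $\g(x-y^1)=\left\lceil\frac{h+1}{2}\right\rceil$ (in effect re-proving Lemma \ref{lemma:24}), while counting the gaps $\{1,\dots,m-1\}\cup\{h\}$ gives $\g(x-y^1)\ge m$; combined with $h<2m$, which you correctly extract from the feasibility criterion for \eqref{ipm:x}, this forces $\g(x-y^1)=m$ and $h\in\{2m-2,2m-1\}$. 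Your version makes explicit three things the paper's terse proof does not: it avoids relying on the maximal-genus characterization of irreducibility; it shows both feasible regions are singletons, so the conclusion holds for feasible rather than optimal solutions (the paper's appeal to ``minimizing the length of $y$'' is in fact unnecessary); and it pinpoints exactly when the overlap case treated by the theorem can occur, with \eqref{ip:x} infeasible for $m<h<2m-2$. What the paper's approach buys in exchange is brevity, since it leans on semigroup-theoretic results already cited in Section \ref{sec:1} rather than re-deriving them at the level of the integer programs.
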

\begin{proof}
We have two $m$-irreducible undercoordinates of $x$, $x^1=x-y^1$ and $x^2=x-y^2$. $x^1$ is an irreducible Kunz-coordinates vector with Frobenius number $h$. $x^2$ is a Kunz-coordinates vector with Frobenius number $h$ and genus $m$. Since the irreducible Kunz-coordinates are those with maximal genus when fixing the Frobenius number and the maximum genus in this case is $m$, the genus of $x^2$ is also $m$, since in both problems we are minimizing the length of $y$.
\end{proof}

The following result states that when solving \eqref{ip:x}, the optimal value is known.

\begin{lem}
\label{lemma:24}
Let $y$ be an optimal solution of \eqref{ip:x}. Then,
$$
\dsum_i y_i = \dsum_{i=1}^{m-1} x_i - \left\lceil \dfrac{h + 1}{2} \right\rceil
$$
\end{lem}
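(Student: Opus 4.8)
The plan is a direct substitution into the genus-defining equation that is already built into the feasible region of \eqref{ip:x}, so no optimization argument is really needed. Write $k={\km}(h)$. Since $y$ is feasible for \eqref{ip:x} it lies in ${\rm P}_k^m(x)$, and in the form \eqref{polytope:x0} this polytope carries the equation
$$
\dsum_{i=1}^{m-1} y_i = \dsum_{i=1}^{m-1} x_i - \left\lceil \dfrac{m(x_k-y_k) + k - m +1}{2} \right\rceil .
$$
First I would invoke the extra constraint $y_{{\km}(h)}=0$ of \eqref{ip:x}, that is $y_k=0$, and plug it into the right-hand side to obtain
$$
\dsum_{i=1}^{m-1} y_i = \dsum_{i=1}^{m-1} x_i - \left\lceil \dfrac{m x_k + k - m +1}{2} \right\rceil .
$$

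The second ingredient is the special-gap characterization of $h$. Since $h\in\SG_m(x)$ with ${\km}(h)=k$, Algorithm \ref{alg:sg} (equivalently the identity $h=m(x_{{\km}(h)}-1)+{\km}(h)$ used throughout) gives $h = m(x_k-1)+k = m x_k + k - m$, whence $m x_k + k - m + 1 = h+1$. Substituting this into the numerator above yields exactly
$$
\dsum_{i=1}^{m-1} y_i = \dsum_{i=1}^{m-1} x_i - \left\lceil \dfrac{h + 1}{2} \right\rceil ,
$$
which is the claimed value.

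Finally I would remark that optimality plays no essential role: once $y_k=0$ is imposed the right-hand side above is a constant, so the objective $\dsum_i y_i$ is in fact constant on the entire feasible set of \eqref{ip:x}; every feasible point attains this value and is therefore optimal. If one prefers to argue from the inequality description \eqref{polytope:x2}, setting $y_k=0$ in the two genus inequalities and using $h=mx_k+k-m$ sandwiches $\dsum_i y_i$ between $\dsum_i x_i-\tfrac{h+2}{2}$ and $\dsum_i x_i-\tfrac{h+1}{2}$, and integrality of $\dsum_i y_i$ together with integrality of $\dsum_i x_i$ forces the single value $\dsum_i x_i-\left\lceil \frac{h+1}{2} \right\rceil$ in both the even and odd $h$ cases. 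There is no genuine obstacle here; the only point requiring care is matching the ceiling form and the inequality form of ${\rm P}_k^m(x)$ and reading off $h=mx_k+k-m$ correctly from the special-gap description.
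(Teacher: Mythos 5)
Your proof is correct and follows essentially the same route as the paper: the paper's one-line proof cites the genus constraint \eqref{eq:irreducible} together with Lemma \ref{lemma:20}, and what you have written is precisely that substitution spelled out, using the explicit constraint $y_{{\km} (h)}=0$ of \eqref{ip:x} and the special-gap identity $h=mx_{{\km} (h)}+{\km} (h)-m$ from Algorithm \ref{alg:sg}, which is exactly the content of Lemma \ref{lemma:20} that the paper invokes. Your closing remark that the objective is in fact constant on the feasible region of \eqref{ip:x}, so optimality is never really used, is a correct sharpening already implicit in the paper's argument.
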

\begin{proof}
It follows directly form the satisfaction of constraint \eqref{eq:irreducible} and by Lemma \ref{lemma:20}.
\end{proof}

Let $x\in \Z^{m-1}$ be a Kunz-coordinates vector. Once a decomposition is chosen, to select a minimal decomposition, we use a set covering formulation, to choose, among the overall set of minimal $m$-irreducible undercoordinates of $x$, a minimal number of elements for the decomposition.

Let $\SG_m(x)=\{h_1, \ldots, h_s\}$ and $D_i=\{x^{i_1}, \ldots, x^{i_{p_i}}\}$ be the set of the maximal Kunz-coordinates vectors of $m$-irreducible undercoordinates of $x$ when fixing the special gap $h_i$ (optimal solutions of ${\rm IP}^m(x,h_i)$), for $i=1, \ldots, s$. We denote by $D=D_1 \cup \cdots \cup D_s$ the set of $m$-irreducible Kunz-coordinates vectors candidates to be involved in the minimal decomposition of $x$.

We consider the following set of decision variables
$$
z_{ij} = \left\{\begin{array}{rl} 1 & \mbox{ if $x^{ij}$ is selected for the minimal decomposition,}\\
0 & \mbox{ otherwise}.\end{array}\right.
$$
for $i=1, \ldots, m-1$, $j=1, \ldots, i_{p_i}$.

We formulate the problem of selecting a minimal number of $m$-irreducible undercoordinates vectors of $x$ that decompose $x$ into $m$-irreducible Kunz-coordinates as:
\begin{equation}
\label{setcovering}
\begin{array}{lll}
\min &  \dsum_{i=1}^{s} \dsum_{j=1}^{p_i} z_{ij} &\\
s.t.&&\\
& \dsum_{i, j/ mx^{ij}_{{\km} (h)} + {\km} (h) \ge h+1} z_{ij} \ge 1 &, \forall h \in \SG_m(x).
\end{array}\tag{${\rm SC}^m(D)$}
\end{equation}

The covering constraint assures that for each special gap of $x$ there is an element in\\
$\{x^{i1}, \ldots, x^{ip_1}, \ldots, x^{s1}, \ldots, x^{sp_{s}}\}$ such that $h$ is a gap of its corresponding semigroup. Minimizing the overall sum we find the minimum number of Kunz-coordinates fulfilling this requirement. Note that when solving \eqref{setcovering} at most one element in $D_i$ is choosen for each $i=1, \ldots, s$.

Finally, if a numerical semigroup $S$ with multiplicity $m$ is given to be decomposed into $m$-irreducible numerical semigroups, we can, by identifying it with its Kunz-coordinates, give a procedure to compute such a decomposition. This process is described in Algorithm \ref{alg:1}. In that implementation we also consider two trivial cases: (1) when the number of special gaps greater than the multiplicity is $1$, being then the semigroup $m$-irreducible; and (2) when the number of this special gaps is $2$, where the decomposition is given by both solutions of the two unique integer programming problems, and no discarding process is needed.

\begin{algorithm2e}[h]
\label{alg:1}

\SetKwInOut{Input}{Input}
\SetKwInOut{Output}{Output}
\Input{A numerical semigroup $S$ with multiplicity $m$.}

Compute the Kunz-coordinates vector of $S$: $x \in \Z^{m-1}_+$. (Computing the Ap\'ery set.)

${\rm D}=\{\}$.

Compute $\SG_m(x)$.

\eIf{$\#\SG_m(x)=1$}{${\rm DmIR}=\{x\}$}{\For{$h_i \in \SG_m(x)$}{
\eIf{$h_i<2m$}{Set $D:= D \cup \{\mathbf{1}+\e_{{\km} (h)}\}$.}{\For{each optimal solution of $\eqref{ip:x}$, $\hat{y}^i$}{Set $D:= D \cup \{x-\hat{y}^i\}$ .}}}}

Let $D=\{x^{11}, \ldots, x^{1i_1}, \ldots, x^{s1}, \ldots, x^{si_s}\}$.

Let $z^*$ be an optimal solution of \eqref{setcovering}.\\
Set ${\rm DmIR}=\{x^{ij} \in {\rm D}: z_{ij}^*=1\}$

\Output{${\rm DmIRNS} = \{ \langle \{m\} \cup \{mx_i' + i: i =1, \ldots, m-1\} \rangle: x' \in {\rm DmIR}\}$.}
\caption{Decomposition into $m$-irreducible numerical semigroups.}
\end{algorithm2e}

As a consequence of all the above comments and results we state the correctness of our approach.

\begin{theo}
Algorithm \ref{alg:1} computes, exactly, a minimal decomposition into $m$-irreducible Kunz-coordinates vector of a Kunz-coordinates vector $x\in \Z^{m-1}_+$.
Furthermore, the entire set of solutions \eqref{setcovering} characterizes the entire set of minimal decompositions.
\end{theo}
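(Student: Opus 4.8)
The plan is to establish the two assertions by reducing both to the correctness of the candidate family $D$ assembled in Algorithm~\ref{alg:1} together with the exactness of the covering model \eqref{setcovering}.

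First I would identify the set $D$ returned by the algorithm with the candidate family of Theorem~\ref{theo:22}. For each $h\in\SG_m(x)$ with $h>2m$ the inner loop records every vector $x-\hat{y}$ with $\hat{y}$ optimal for \eqref{ip:x}, while for $h<2m$ it records $\mathbf{1}+\e_{{\km}(h)}$, which by the remark following Theorem~\ref{theo:22} and by Theorem~\ref{theo:23} is exactly the unique element furnished by \eqref{ipm:x} (the case $h=2m$ cannot occur since $h\not\equiv 0\pmod m$, as multiples of $m$ belong to $S$). By Lemma~\ref{lemma:20} every vector placed in $D_i$ has Frobenius number exactly $h_i$; hence the blocks $D_1,\dots,D_s$ are pairwise disjoint and the pairs $(i,j)$ enumerate genuinely distinct maximal $m$-irreducible undercoordinates. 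Theorem~\ref{theo:22} then guarantees that $D$ contains every vector that can appear in any minimal decomposition, so the search may legitimately be restricted to subsets of $D$.

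Next I would verify that \eqref{setcovering} is an exact model for ``choose a minimum-size decomposition inside $D$''. By the reformulation of Proposition~\ref{prop:11}, a selection $\{x^{ij}:z_{ij}=1\}$ decomposes $x$ if and only if every $h\in\SG_m(x)$ lies in $\G(x^{ij})$ for at least one selected vector. Using the gap test of Proposition~\ref{prop:20}, namely $h\in\G(x')$ iff $mx'_{{\km}(h)}+{\km}(h)\ge h+1$, this requirement is precisely the covering constraint of \eqref{setcovering}. Since distinct index pairs correspond to distinct vectors, the objective of \eqref{setcovering} equals the cardinality of the chosen decomposition; consequently an optimal solution produces a decomposition from $D$ of minimum cardinality, which by the preceding paragraph is a minimal decomposition of $x$. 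This proves the first assertion.

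Finally, for the ``furthermore'' part I would argue the correspondence in both directions. Any minimal decomposition has all of its members in $D$ by Theorem~\ref{theo:22}, so its incidence vector is feasible for \eqref{setcovering} with objective value $\rat(S)$; being of minimum cardinality it is an optimal solution. Conversely, by the equivalence just established, every optimal solution of \eqref{setcovering} is a decomposition built from $D$ of minimum cardinality, hence a minimal decomposition. The hard part is exactly this completeness claim: one must be sure that no minimal decomposition escapes $D$ by using a maximal $m$-irreducible undercoordinate not obtained as an optimum of \eqref{ip:x} or \eqref{ipm:x}. This is supplied by Theorem~\ref{theo:22}, which itself rests on Lemma~\ref{lemma:12} and Lemma~\ref{lemma:20}: fixing each special gap as a Frobenius number through the constraint $y_{{\km}(h)}=0$ loses no relevant candidate, and the minimum-length optimum of \eqref{ip:x} is indeed the maximal undercoordinate sought. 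Once disjointness of the $D_i$ and the gap/covering dictionary are in place, the correspondence between the optimal solutions of \eqref{setcovering} and the minimal decompositions of $x$ follows without further difficulty.
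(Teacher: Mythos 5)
Your proposal is correct and takes essentially the same route as the paper: the paper states this theorem with no separate proof, merely ``as a consequence of all the above comments and results,'' and your argument assembles exactly those intended ingredients --- Theorem \ref{theo:22} for completeness of the candidate set $D$ built by Algorithm \ref{alg:1}, Lemma \ref{lemma:20} for the Frobenius-number/disjointness observation, Propositions \ref{prop:11} and \ref{prop:20} for the gap/covering dictionary, and the resulting exactness of \eqref{setcovering}. The only caveat is that the completeness claim you yourself flag as the hard part --- that \emph{every} minimal decomposition lies inside $D$ --- carries the same imprecision as the paper's own statement of Theorem \ref{theo:22}, since Lemma \ref{lemma:12} only guarantees that \emph{some} minimal decomposition has this form (a minimal decomposition may use non-maximal $m$-irreducible undercoordinates); this affects the literal reading of the ``furthermore'' characterization identically in your write-up and in the paper, while leaving the correctness of Algorithm \ref{alg:1} (minimum cardinality is attained) intact in both.
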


Algorithm \ref{alg:1} is able to compute a minimal decomposition of a Kunz-coordinates vector, $x \in \Z_+^{m-1}$, by enumerating the whole set of optimal solutions of \eqref{ip:x}. However, this task is not easy since, mainly, it consists of enumerating the set of solutions of a diophantine system of inequalities, which is hard to compute (see \cite{gaeryjohnson}). In what follows we propose an approximate approach to obtain a ``short'' decomposition into $m$-irreducibles by choosing an optimal solution of \eqref{ip:x} instead of enumerating all of them. One may choose any of them, but we can also slightly modify the integer programming model to obtain a good solution.

We consider the following set of decision variables:
$$
w_i = \left\{\begin{array}{rl} 1 & \mbox{ if $h_i \in \G(x-y)$,}\\
0 & \mbox{ otherwise}.\end{array}\right.
$$
for $i=1, \ldots, n$, and $\SG_m(x)=\{h_1, \ldots, h_n\}$.

For a fixed $h\in \SG_m(x)$, $w_i=1$ represents that $h_i$ is covered by the solution $x-y$, and then, that can be discarded to obtain a minimal decomposition.

Then, to be sure that we maximize the number of elements in the previous decomposition that can be discarded, we formulate the problem as:

\begin{equation}
\label{ip:xMIN}
\begin{array}{llll}
\max &  \dsum_{i=1}^{\#\SG_m(x)} w_i &\\
s.t.&&\\
&y \in {\rm P}_{{\km} (h)}^m(x)\\
&y_{k(h)}=0\\
&m(x_{{\km} (\hat{h}_i)}- y_{{\km} (\hat{h}_i)})& &\\
&+ {\km} (\hat{h}_i) - \hat{h}_i-1 + M(1-w_i)\ge 0 & \mbox{ forall $\hat{h}_i \in \SG_m(x)$}&\\
\end{array}\tag{${\rm IP}^m_k(x,h)$}
\end{equation}
where $M>>0$

Observe that the {\emph big-$M$} constraint $m(x_{{\km} (\hat{h}_i)}- y_{{\km} (\hat{h}_i)})+ {\km} (\hat{h}_i) - \hat{h}_i-1 + M(1-w_i)\ge 0$ assures that if $\hat{h} \not\in \G(x-y)$ (equivalently $m(x_{{\km} (\hat{h}_i)}- y_{{\km} (\hat{h}_i)})+ {\km} (\hat{h}_i) < \hat{h}_i+1$, then, $w_i=0$. Otherwise, $w_i$ could be $0$ or $1$, but since we are maximizing, $w_i=1$.

The optimal value of this integer problem is then the number of numerical semigroups in the decomposition that can be discarded with this choice.

A pseudocode of the proposed approximated scheme for obtaining a ``short'' decomposition of a Kunz-coordinates vector $x \in\Z^{m-1}_+$ into $m$-irreducible Kunz-coordinates vectors by solving \eqref{ip:xMIN} is shown in Algorithm \ref{alg:2}.

\begin{algorithm2e}[h]
\label{alg:2}

\SetKwInOut{Input}{Input}
\SetKwInOut{Output}{Output}
\Input{A numerical semigroup $S$ with multiplicity $m$.}

Compute the Kunz-coordinates vector of $S$: $x \in \Z^{m-1}_+$. (Computing the Ap\'ery set.)

${\rm D}=\{\}$.

Compute $\SG_m(x)$.

\eIf{$\#\SG_m(x)=1$}{${\rm DmIR}=\{x\}$}{\For{$h_i \in \SG_m(x)$}{
\eIf{$h_i<2m$}{Set $D:= D \cup \{\mathbf{1}+\e_{{\km} (h)}\}$.}{Let $\hat{y}$ be an optimal solution of $\eqref{ip:x}$.  Set $D:= D \cup \{x-\hat{y}\}$ .}}}

Let $D=\{x^1, \ldots, x^s\}$.

\eIf{$\#\SG_m(x)=2$}{${\rm DmIR}={\rm D}$}{Select a minimal decomposition from ${\rm D}$. Let $z^*$ be an optimal solution of \eqref{setcovering}.\\
Set ${\rm DmIR}=\{x^j \in {\rm D}: z_j^*=1\}$}

\Output{${\rm DmIRNS} = \{ \langle \{m\} \cup \{mx_i' + i: i =1, \ldots, m-1\} \rangle: x' \in {\rm DmIR}\}$.}
\caption{Decomposition into $m$-irreducible numerical semigroups.}
\end{algorithm2e}

When running Algorithm \ref{alg:2} we obtain an optimal solution of the problem, and then moving through all the special gaps we obtain a decomposition  into $m$-irreducible Kunz-coordinates. With the following example we show how algorithms \ref{alg:1} and \ref{alg:2} run for a given numerical semigroup.

\begin{ex}
\label{ex:26}
 Let $S=\langle 5, 11, 12, 18 \rangle$. Its Kunz-coordinates vector is $x=(2, 2, 3, 4)$ and  $\SG_m(S) = \{6, 13, 19\}$.

First, we solve one integer problem for each special gap:
\begin{itemize}
\item $h=6$: Since $h < 2\times 5=10$, the integer problem  to solve  is ${\rm P}_5^5(x, 6)$ and then $D_1=\{x^{11}=(2,1,1,1)\}$.
\item $h=13$: In this case $h > 2\times 5=10$ and $h \equiv 3 \pmod 5$, so the  integer problem in this case is ${\rm P}_3^5(x, 13)$. The whole set of optimal solutions is $\{(1, 0, 0, 3), (0,1,0,3)\}$, so $D_2=\{x^{21}=(2,1,3,1), x^{22}=(1,2,3,1)\}$.
\item Finally, for $h=19$: clearly $h > 2\times 5=10$ and $h \equiv 4 \pmod 5$, so the  problem is now  ${\rm P}_4^5(x, 19)$. The set of optimal solutions is $\{(1, 0, 0,0), (0,0,0,1)\}$, and then $D_3=\{x^{31}=(1,2,3,4), x^{32}=(2,2,2,4)\}$.
\end{itemize}
The above five Kunz-coordinates vectors give a decomposition in oversemigroups of $S$. To obtain a minimal decomposition we must solve the associated set covering problem.

Solving ${\rm SC}_5(D)$ we obtain that $z_{11}=z_{31}=1$ and all other variables are set to zero, being then the minimal decomposition given by $x^{11}$ and $x^{31}$, i.e., a minimal decomposition into $5$-irreducible Kunz-coordinates is given by $\{(2,1,1,1), (1,2,3,4)\}$. Translating to numerical semigroups:
$$
S=\langle 5, 11, 7, 8, 9 \rangle \cap \langle 5, 6, 12, 18, 24\rangle
$$

When solving \eqref{ip:xMIN}, we obtain the same decomposition.

\end{ex}

However, the decomposition obtained with Algorithm \ref{alg:2} may be not minimal. The following example illustrates this fact.

\begin{ex}
\label{ex:0}
Let $S=\langle 12, 17, 18, 23, 26, 28, 33, 39 \rangle$ be a numerical semigroup with multiplicity $12$. Its Kunz-coordinates vector is $x=(4,2,3,2,1,1,3,3,2,2,1)$ and $\SG_{12}(x)=\{21,22,27,31,32,37\}$. Then, $6$ integer problems must be solved: ${\rm IP}^{12}_{12}(x,21)$, ${\rm IP}^{12}_{12}(x,22)$, ${\rm IP}^{12}_3(x,27)$, ${\rm IP}^{12}_7(x,31)$, ${\rm IP}^{12}_8(x,32)$ and ${\rm IP}^{12}_1(x,37)$. By solving these problems with {\sf Xpress-Mosel 7.0} \cite{fico} we obtain the following optimal solutions
$x - y \in
\{(1,1,1,1,1,1,1,1,2,1,1)$ $, (1,1,1,1,1,1,1,1,1,2,1),$ $(1,2,3,1,1,1,1,1,1,1,1),$ $(2,2,1,2,1,1,3,1,1,1,1),$ $(1,2,2,2,1,1,2,3,1,1,1),$ $(4,2,1,2,1,1,2,2,1,2,1)\}$

The translations of the above coordinates in terms of numerical semigroups are \\
$
\{\langle 12,13,14,15,16,17,18,19,20,22,23,33\rangle ,$
$\langle 12, 13, 14, 15, 16, 17, 18, 19, 20, 21, 34, 23\rangle ,$\\
$\langle 12,13,16,17,18,19,20,21,22,23\rangle ,$
$\langle 12,15,17,18,20,21,22,23,25,26,28,43\rangle ,$\\
$\langle 12,13,17,18,21,22,23,26,27,28,31,44\rangle ,$
$\langle 12,15,17,18,21,23,26,28,31,32,34,49\rangle \}
$

Now, by solving problem \eqref{setcovering}, $\langle 12, 13, 14, 15, 16, 17, 18, 19, 20, 21, 34, 23\rangle$ is discarded. Then, the decomposition using our methodology is given by five $12$-irreducible numerical semigroups:\\
$S= \langle 12,13,14,15,16,17,18,19,20,22,23,33\rangle \cap
\langle 12,13,16,17,18,19,20,21,22,23\rangle \cap$\\
$\langle 12,15,17,18,20,21,22,23,25,26,28,43\rangle \cap
\langle 12,13,17,18,21,22,23,26,27,28,31,44\rangle \cap$\\
$\langle 12,15,17,18,21,23,26,28,32,34\rangle
$

However, this decomposition is not minimal since
$
S = \langle 12,13,16,17,18,19,20,21,22,23,26,39\rangle \cap
\langle 12,15,17,18,20,21,22,23,25,26,28,43\rangle \cap
\langle 12,13,17,18,21,22,23,26,27,28,31,44\rangle \cap$\\
$\langle 12,15,16,17,18,23,26,31,32,33,34,49\rangle
$
is a decomposition into $m$-irreducible numerical semigroups using a smaller number of terms.
\end{ex}

The situation of Example \ref{ex:0} is due to the fact that among the whole set of optimal solutions of \eqref{ip:x}, Algorithm \ref{alg:2} chooses a particular one, but depending of that choice more or less elements can be discarded from that decomposition to obtain the minimal one. To avoid this fact, we need to consider a compact model that connects all the possible elements in the decomposition and that selects, among all of  them, the smallest number of solutions to decompose a Kunz-coordinates vector.

\section{A compact model for minimally decomposing into $m$-irreducible Kunz-coordinates vectors}
\label{sec:5}

In the section above we describe an exact and a heuristic procedure to compute a minimal decomposition of a Kunz-coordinates vector $x\in \Z^{m-1}$ into $m$-irreducible Kunz-coordinates. To obtain solutions by using that exact procedure we need to enumerate the solutions of a knapsack type diophantine equation included in the Kunz polytope. Once we have those solutions, a set covering problem must be solved to obtain a minimal decomposition. By using that model, the complete enumeration cannot be avoided since by choosing one solution, one may obtain non-minimal decompositions when solving the set covering model (see Example \ref{ex:0}).  We present here a compact model to decompose any Kunz-coordinates vector, $x \in \Z_+^{m-1}$,  merging in a single integer programming problem all the subproblems considered in the previous section to ensure minimal decompositions. Moreover, this approach will allow us to prove a polynomiality result for the problem of decomposing into $m$-irreducible numerical semigoups.

Let $\SG_m(x)=\{h_1, \ldots, h_s\}$.

We consider the following families of decision variables for the new model:

\begin{itemize}
\item $y_i^l \in \Z_+$, such that $x-y^l$ is a $m$-irreducible undercoordinate of $x$ with Frobenius number $h_l$, for all $l=1, \ldots, s$ and $i=1, \ldots, m-1$.
\item $w_l \in \{0,1\}$, representing if $x-y^{h_l}$ is chosen $(1)$ or not $(0)$ for a minimal decomposition into $m$-irreducible coordinates of $x$, for all $l=1, \ldots, s$.
\item $z_k^l \in \{0,1\}$, that measures if $h_k$ is a gap of $x-y^l$ ($1$) or not $(0)$, for all $l, k=1, \ldots, s$. Note that $h_k \in \G(x-y^l)$ if and only if $y^l_{k(h_k)}=0$.
\end{itemize}

Then, the proposed model, ${\rm CIP}^m(x)$, is described as follows:
\begin{equation}
\label{cm}\tag{${\rm CIP}^m(x)$}
\min \;\; \dsum_{l=1}^{s} w_l
\end{equation}
s.t.
\begin{align}
& y_i^l \leq x_i-1 & \mbox{ forall $i=1, \ldots, m-1$ forall $l=1, \ldots, s$,}\label{r4}\\
& y_i^l + y_j^l - y_{i+j}^l \leq x_i + x_j - x_{i+j} & \mbox{ if $i+j<m$, forall $l=1, \ldots, s$,}\label{r5}\\
& y_i^l + y_j^l - y_{i+j-m}^l \leq x_i + x_j - x_{i+j-m} +1& \mbox{ if $i+j>m$, forall $l=1, \ldots, s$,}\label{r6}\\
& \dsum_{i=1}^{m-1} y_i^l = (\dsum_{i=1}^{m-1} x_i - \left\lceil\dfrac{h_l+1}{2}\right\rceil) w_l & \mbox{forall $l=1, \ldots, s$ with $h_l>2m$,}\label{r7}\\
& \dsum_{i=1}^{m-1} y_i^l = (\dsum_{i=1}^{m-1} x_i - m) w_l & \mbox{forall $l=1, \ldots, s$ with $h_l<2m$,}\label{r8}\\
& y^l_{k(h_l)}=0 & \mbox{forall $l=1, \ldots, s$,}\label{r9}\\
& \dsum_{l} z^l_{k(h_k)} \geq 1 & \mbox{forall $k=1, \ldots, s$,}\label{r10}
\end{align}
\begin{align}
 & z^l_{{\km} (h_k)} \geq 1- y^l_{{\km} (h_k)} - M\,(1-w_l) &\mbox{forall $l, k=1, \ldots, s$},\label{r11}\\
& y^l_{k(h_k)} \leq M(1-z^l_{k(h_k)}) & \mbox{ forall $k=1, \ldots, s$,}\label{r12}\\
& z_{k(h_k)}^l \leq w_l &\mbox{forall $l, k=1, \ldots, s$,}\label{r13}\\
& y_i^l \in \Z_+, &\mbox{ forall $i=1, \ldots, m-1$, $l=1, \ldots, s$,}\label{r14}\\
& w_l \in \{0,1\}, &\mbox{ forall $l=1, \ldots, s$,}\label{r15}\\
& z_j^l \in \{0,1\} &\mbox{ forall $l =1, \ldots, s$, $j=1, \ldots, m-1$.}\label{r16}
\end{align}

for $M\geq \max\{x_{k(h_l)}: l=1, \ldots, s\}$.

The components of any optimal solutions, $y^*$, of the above problem in the set $\{y^{*l}: y^{*l} \neq 0, l=1, \ldots, s\} = \{y^{*l_1}, \ldots, y^{*l_p}\}$ give a minimal decomposition of $x$ into $m$-irreducible Kunz-coordinates vectors as $\{x-y^{*l_j}: j=1, \ldots, s\}$. Note also that $\F(x-y^{*l_j})= h_{l_i}$.

Constraints \eqref{r4}-\eqref{r6} assure that $x-y^l$ is a undercoordinate of $x$. \eqref{r7} and \eqref{r8} give conditions related to the genus and the Frobenius number of those Kunz-coordinates vectors (Corollary \ref{cor:5}) associated to the choice of $y^l$ ($w^l=1$). Constraint \eqref{r9} assures that $h_l$ is a gap of $x-y^l$, and \eqref{r10} that there is at least one element in the decomposition having $h_l$ among its gaps. Constraints \eqref{r11}-\eqref{r13} control that the variables $z_k^l$ are well-defined. \eqref{r14}-\eqref{r16} are the integrality and binary constraints for the variables.

The optimal value of \eqref{cm} gives the number of Kunz-coordinates involved in a minimal decomposition of $x$ into $m$-irreducible Kunz-coordinates vectors.

The solution of \eqref{cm} gives exactly a minimal decomposition of $x$ into $m$-irreducible Kunz-coordinates
(or $m$-irreducible numerical semigroups). However, it is  harder to solve than the problems in Algorithm \ref{alg:2} since it has much
more variables (by using Algorithm \ref{alg:2}, we need to solve at most $m-1$ problems with $m-1$ variables and a set covering problem with at most $m-1$ variables while \eqref{cm} has $2(m-1)^2+(m-1)$ integer/binary variables). In the computational experiments (see Section \ref{sec:5}) we have noticed that the solutions when running Algorithm \ref{alg:2} are not far from minimality and it is faster than running Algorithm \ref{alg:1} or solving \eqref{cm}.

\begin{rem}[$m$-symmetry and $m$-pseudosymmetry]
In \cite{blanco-rosales10} it is also defined the notion of $m$-symmetry and $m$-pseudosymmetry of a  numerical semigroup with multiplicity $m$, extending the previous notions of symmetry and pseudosymmetry (see \cite{Ro-GS09}). A numerical semigroup, $S$, with multiplicity $m$ is $m$\textit{-symmetric} if $S$ is $m$-irreducible and $\F(S)$ is odd. On the other hand, $S$ is $m$\textit{-pseudosymmetric} if $S$ is $m$-irreducible and $\F(S)$ is even.

Rosales and Branco analyzed in \cite{rosales02} and \cite{rosales02b} those numerical semigroups
that can be decomposed into symmetric numerical semigroups (in this case the semigroup is called ISY-semigroup).
Another interesting application of this methodology is to  compute a decomposition of $S$ into $m$-symmetric numerical
 semigroups (following the notation in \cite{rosales02b}, $S$ is an ISYM-semigroup). This follows by fixing in \eqref{cm}
 that the $m$-irreducible numerical oversemigroups of $S$ associated to even special gaps do not appear in the decomposition
($y^l_i=0$ for all $i=1, \ldots, m-1$ if $l$ is even). Thus, the $m$-irreducible numerical semigroups which Frobenius numbers
are each one of the odd special gaps must cover the whole set of gaps. If this problem is feasible, its solution gives a minimal
decomposition into $m$-symmetric numerical semigroups. However, in this case we cannot ensured that it is always possible to
decompose into $m$-symmetric numerical semigroups (for instance, a numerical semigroup with even Frobenius number is not decomposable
in this way). Then, if problem \eqref{cm} is infeasible, the semigroup cannot be expressed as an intersection of $m$-symmetric numerical semigroups.

In addition, \cite{rosales02b} analyzes the set of ISYG-semigroups (those that can be expressed as an intersection of symmetric semigroups with the same Frobenius number). We could introduce the notion of ISYGM-semigroups (those that can be expressed as an intersection of symmetric numerical semigroups with the same Frobenius number and multiplicity). This case can be also handled with our approach by fixing the Frobenius number of the semigroup in \eqref{cm}.

 An similar methodology can be applied to compute a decomposition into $m$-pseudosymmetric numerical semigroups.
\end{rem}

\begin{rem}[Computational Complexity]
Assume that $m$ is fixed. \eqref{cm} has at most $2(m-1)^2+(m-1)$ variables and then, it is solvable in polynomial time \cite{lenstra}.
It is also worth noting that the heuristic approach also has polynomial time overall complexity. Indeed, for each special gap of $x$, one integer program is solved, ${\rm IP}^m (x,h)$ if $h>2m$ or ${\rm IP}^m_{m}(x)$ if $h<2m$. Since the number of special gaps is bounded above by $m-1$, the complexity of this step is polynomial for fixed multiplicity, so polynomial. Once we have the solutions for all the special gaps, the discarding step consists of solving the set covering problem \eqref{setcovering} with at most $m-1$ variables, so polynomial in $m$.

On the other hand, , the algorithm proposed in \cite{blanco-rosales10} to decompose a numerical semigroup $S$ with multiplicity $m$ into $m$-irreducible numerical semigroups can be rewritten as follows.

Let $\mathcal{G}_x= (V,E)$ be a directed graph whose set of vertices is the set of undercoordinates of $x$, $\mathcal{U}_m(x)$, and $(x^1, x^2) \in E$ if $x^2=x^1 - \e_{h\pmod m}$ for some $h \in \SG_m(x)$. Figure \ref{fig:graph2} illustrates how this graph is built. In that figure we denote $\SG_m(x)=\{h_1, \ldots, h_k\}$ and $\SG_m(x + \e_{{\km} (h)}=\{h_1', \ldots, h'_{k}\}$. The algorithm looks for a set o vertices $\{x^1, \ldots, x^n\}$ with the properties that $\#\SG_m(x^i)=1$ for all $i=1, \ldots, n$ and that any other vertex is dominated by any of the elements in the set. Furthermore, $\mathcal{G}_x$ is a tree since it does not have circuits. In \cite{blanco-rosales10}, a breath first search over this tree is proposed to find the desired set. Clearly, the worst case complexity of this method is exponential even for fixed multiplicity.

\begin{figure}[h]
{\tiny$$
\xymatrix{ & &   x\ar[dl]\ar[dr] &\\
& x - \e_{{\km} (h_1)}\ar[dl]  \ar[dr] &  \cdots &  x - \e_{{\km} (h_k)}\\
x - \e_{{\km} (h_1)} - \e_{{\km} (h'_1)} &  \cdots &x - \e_{{\km} (h_1)} - \e_{{\km} (h'_k)} &&}
$$}
\caption{Sketch of $\mathcal{G}_x$\label{fig:graph2}.}
\end{figure}
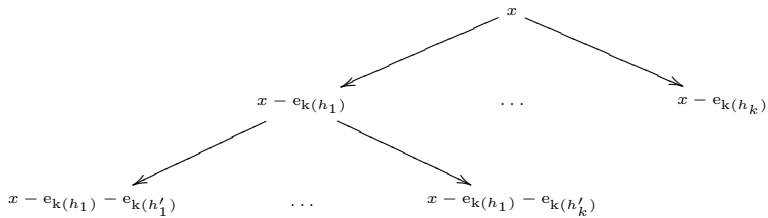
\end{rem}

\section{Computational Experiments}
\label{sec:6}

In this section we present the results of some computational experiments performed to analyze the applicability of the proposed algorithm. Our algorithm has been implemented in {\sf XPRESS-Mosel 7.0} \cite{fico} that allows to solve the single-objective integer problems involved in the decomposition into $m$-irreducible numerical semigroups, by using a branch-and-bound method and nesting models by calling the library \texttt{mmjobs}. The algorithms have been executed on a PC with an Intel Core 2 Quad processor at 2x 2.50 Ghz and 4 GB of RAM.

The complexity of the algorithm depends of the dimension of the space (multiplicity), the size of the coefficients of the constraints and the number of special gaps. Then, we have randomly generated three different batteries of numerical semigroups with the following requirements:
\begin{description}
\item[Battery I] Numerical semigroups with multiplicities ranging in $[0,25]$ (divided in the five subintervals $(0,5]$, $(5,10]$, $(10, 15]$, $(15,20]$ and $(20, 25]$) with generators ranging in $[2, 5000]$. Ten instances for each subinterval.
\item[Battery II] Numerical semigroups with multiplicities ranging in $[10,2000]$ (divided in the seven subintervals $(10, 25]$, $(25, 50]$, $(50, 100]$, $(100, 250]$, $(250, 500]$, $(500, 1000]$, $(1000, 2000]$) with generators ranging in $[2, 5000]$. Five instances for each subinterval.
\item[Battery III] Numerical semigroups with multiplicities ranging in $[25,150]$ (divided in the five subintervals $(25, 50], (50, 75], (75, 100], (100,
125]$ and $(125, 150]$) with generators ranging in $[2, 5000]$ and with number of special gaps greater than the multiplicity less than or equal to $30$. Ten instances for each subinterval.
\end{description}
The first battery of problem is designed to compare the three algorithms: the one implemented in
 {\sf GAP}, the heuristic approach (Algorithm \ref{alg:2}) and the compact model \eqref{cm}. With the second set of problems,
we check the efficiency of Algorithm \ref{alg:2} for solving large instances. Finally, with the third test set,
we compare the difficulty of solving \eqref{cm} comparing to the heuristic algorithm. (Note that this difficulty is mainly due to the number of special gaps since it increases the number of variables.) Therefore, we generate numerical semigroups
with very large multiplicities but where the number of special gaps is bounded above by $30$.

We have used recursively the function \texttt{RandomListForNS} of {\sf GAP}\cite{numericalsgps} until we
found the list of integers defining the semigroup  with the above requirements. The implementation done for decomposing in {\sf GAP}
into $m$-irreducible numerical semigroups is an adaptation of the function \texttt{DecomposeIntoIrreducibles}
for decomposing into standard irreducible numerical semigroups.

The results of these experiments are summarized in tables \ref{table:1}--\ref{table:3}. In these tables,
$\texttt{m}$ indicates the range of the multiplicity, $\texttt{CMtime}$ and $\texttt{Heurtime}$ the average times in seconds consumed
 by solving \eqref{cm} and Algorithm \ref{alg:2}, respectively, in {\sf Xpress-Mosel}, \texttt{GAPtime} informs on the average time
consumed by {\sf GAP} for the same task, $\#\SG$ is the average number of special gaps of the problems and
 $\#$\texttt{m-irred} is the average number of semigroups involved in a minimal decomposition. The column
 \texttt{avgap} is the average  difference between  the number of numerical semigroups used in the heuristic decomposition and the number of numerical semigroups used in the minimal decomposition computed by solving \eqref{cm}.

Note that {\sf GAP} was not able to solve any of the 10 instances when the multiplicity ranges in $(20, 25]$.

\begin{table}[h]
\begin{center}
    \begin{tabular}{|c|ccc|cc|c|}
  \hline
  $\texttt{m}$  & $\texttt{CMtime}$ & $\texttt{Heurtime}$  & $\texttt{GAPtime}$ &$\# \SG$ & $\#\texttt{m-irred}$ & \texttt{avgap}\\\hline
    [0,5] & 0.001 & 0.020 & 0.001 & 1.5   & 1.5 & 0\\
    (5,10] & 0.003 &0.054 & 2.3973 & 2.7   & 2.3 & 0 \\
    (10, 15] & 0.013 &0.091 & 4.1645 & 4.1   & 3.4 & 0.1\\
    (15,20] & 0.053 &0.081 & 523.556 & 5.4   & 4 & 0\\
    (20,25] & 0.046 & 0.089 & n/a & 5.7   & 4.4 & 0.1\\
        \hline
    \end{tabular}
    \vspace*{0.25cm}
\caption{Results of the computational experiments for Battery I.\label{table:1}}
\end{center}
\end{table}

\begin{table}[h]
\begin{center}
    \begin{tabular}{|c|c|cc|}
  \hline
    $\texttt{m}$  & $\texttt{Heurtime}$  & $\# \SG$ & $\#\texttt{m-irred}$\\\hline
     (25,50]  & 0.242 & 11.8  & 7.2 \\
     (50,100]  & 1.411 & 19.6  & 9.6 \\
     (100,250]  & 168.272 & 42.4  & 25.4 \\
     (250,500]  & 1318.475 & 86.2  & 47.8  \\
     (500,1000]  &  1056.878     &  27.2     & 18.8 \\
     (1000,2000]  &  1895.058     &   15.2    & 9.8 \\\hline
    \end{tabular}
        \vspace*{0.25cm}
\caption{Results of the computational experiments for Battery II.\label{table:2}}
\end{center}
\end{table}

\begin{table}[h]
\begin{center}
    \begin{tabular}{|c|cc|cc|c|}
  \hline
  $\texttt{m}$  & $\texttt{CMtime}$ & $\texttt{Heurtime}$  &$\# \SG$ & $\#\texttt{m-irred}$ & \texttt{avgap}\\\hline
     (25,50]   & 1.064   & 0.201  &  9.3 & 5.8  &  0.7  \\
     (50,75]   & 6.981   & 0.713  &   13.5&  7.1 &   1.1 \\
    (75,100]   & 58.580   & 1.819  &   16.3&  9 &   1 \\
    (100,125]  & 102.999   & 3.428  &   15.1&  7.1 &  1.6  \\
    (125,150]  &  144.531  & 5.752  &  15.5 &  8.3 &  1.3  \\\hline
    \end{tabular}
        \vspace*{0.25cm}
\caption{Results of the computational experiments for Battery III.\label{table:3}}
\end{center}
\end{table}

We have also observed that the algorithm implemented in {\sf GAP} does not ensure minimal decompositions into $m$-irreducible numerical semigroups.
For instance, consider the following case: $S=\langle  15, 17, 19, 48, 52, 59, 73 \rangle$ that decomposes in {\sf GAP} into six $15$-irreducible
numerical semigroups while our methodology obtains a decomposition into five $15$-irreducible numerical semigroups.  The reason why {\sf GAP} fails is
closely related to the same fact that prevents to ensure, in all cases,  Algorithm \ref{alg:2} to get minimal solutions

From our computational experiments we observe that except for the instances with $m \in [0,5]$, where the algorithm in {\sf GAP} spends almost
the same time to compute the decompositions, our methodology solves the problems faster than {\sf GAP}.
Actually, in this battery solving the problem \eqref{cm} is the best way to compute such a decomposition. This is due to the minimum
computational time consumed by {\sf Xpress-Mosel} to load the problems  involved in Algorithm \ref{alg:2}.

Both, the exact algorithm based on solving \eqref{cm} and the heuristic approach are able
to compute, in reasonable CPU times, minimal decompositions into $m$-irreducible numerical semigroups
for multiplicities up to $150$ while the procedure implemented in {\sf GAP} is not able to solve problems
 with multiplicities ranging even in $(20, 25]$. Furthermore, although the default branch-and-bound algorithm
is not able to solve \eqref{cm} for larger multiplicities, the heuristic approach solves problems with multiplicities up  to $m=2000$.

The heuristic approach finds, much faster than the exact approach, a short decomposition of a numerical semigroups into $m$-irreducible numerical semigroups. Furthermore, the heuristic approach reaches most of the times a minimal decomposition. For instance, in the first battery of problems, the heuristic value does not coincide with the exact optimal one in only two out the fifty instances. Moreover,  the third battery of instances
satisfies that in 30\% of the cases the minimal decomposition coincides with the heuristic short decomposition, in 34\% of the cases the difference is only one semigroup, in 30\% of the cases is two semigroups, in 4\% (two cases) is three and in only 2\% (one instance) is four.

Note that most of the computations done by using Algorithm \ref{alg:2} may be parallelized by solving in different cores each one of the problems \eqref{ip:xMIN} since they are independent. This could improve the CPU times and sizes of the problems because more than $99\%$ of the time  consumed by this algorithm is to solving those problems, while just a little part of the time is spent solving the set covering problem.

On the other hand, we have only implemented the proposed models in {\sf Xpress-Mosel}, with the default branch-and-bound method. Larger instances could be solved by applying specific more sophisticated integer programming algorithms to solve each one of the problems.

\section{Acknowledgments}
This research has been partially supported by the Spanish Ministry of Science and Education grants {\it MTM2007-
67433-C02-01} and {\it MTM2010-19576-C02-01}. The first author have been also supported by Juan de la Cierva grant {\it JCI-2009-03896}.

\end{document}